\def\R{ {\mathbb R}}
\def\fl{\mbox{fl}}\def\sign{\mbox{sign}}
\def\eqand{\qquad\mbox{and}\qquad}
\def\nullspace{{\cal N}}
\def\vec#1{{\bm #1}}
\def\T{{\rm T}}
\let\oldenumerate=\enumerate
\renewenvironment{enumerate}{\oldenumerate\parindent=1.5em}{\endlist}
\title{Spectral Transformation for the Dense
  Symmetric Semidefinite Generalized Eigenvalue Problem} \author{Michael
  Stewart\thanks{Department of Mathematics and Statistics, Georgia
    State University, Atlanta GA 30303, {\tt mastewart@gsu.edu}}}
\begin{document}
\maketitle
\begin{abstract}
  The spectral transformation Lanczos method for the sparse symmetric
  definite generalized eigenvalue problem for matrices $A$ and $B$ is
  an iterative method that addresses the case of semidefinite or ill
  conditioned $B$ using a shifted and inverted formulation of the
  problem. This paper proposes the same approach for dense problems
  and shows that with a shift chosen in accordance with certain
  constraints, the algorithm can conditionally ensure that every
  computed shifted and inverted eigenvalue is close to the exact
  shifted and inverted eigenvalue of a pair of matrices close to $A$
  and $B$. Under the same assumptions on the shift, the analysis of
  the algorithm for the shifted and inverted problem leads to useful
  error bounds for the original problem, including a bound that shows
  how a single shift that is of moderate size in a scaled sense can be
  chosen so that every computed generalized eigenvalue corresponds to
  a generalized eigenvalue of a pair of matrices close to $A$ and
  $B$. The computed generalized eigenvectors give a relative residual
  that depends on the distance between the corresponding generalized
  eigenvalue and the shift. If the shift is of moderate size, then
  relative residuals are small for generalized eigenvalues that are
  not much larger than the shift. Larger shifts give small relative
  residuals for generalized eigenvalues that are not much larger or
  smaller than the shift.
\end{abstract}
\begin{keywords}
  generalized eigenvalues, eigenvalues, eigenvectors, error analysis
\end{keywords}
\begin{AMS}
  15A18, 15A22, 15A23, 15A42, 65F15
\end{AMS}

\section{Introduction}
\label{sec:introduction}
\pdfbookmark[0]{Introduction}{bk:introduction}

The {\em symmetric semidefinite generalized
  eigenvalue problem}
\begin{equation}
  \label{eq:sym_gen_eig}
  A \vec{v} = \lambda B \vec{v}, \qquad \vec{v}\neq \vec{0}
\end{equation}
for $n\times n$ symmetric $A$ and symmetric positive semidefinite or
positive definite $B$ arises commonly in a number of application
areas, notably in structural engineering, and has a long history in
the research literature on numerical linear algebra. When $B$ is
positive definite, the most commonly used algorithm for the dense
problem is based on Cholesky factorization of $B$. It is described in
\cite{wilk:65} and implemented in the LAPACK routine \verb|xSYGV|
\cite{abbd:99}. However, since it involves forming a matrix from the
possibly ill conditioned Cholesky factor, very little can be proven
about its stability. In practice, it typically delivers small relative
residuals for eigenvalues that are large in magnitude and larger
relative residuals for small eigenvalues. The purpose of this paper is
to describe a related algorithm that, instead of solving systems with
the Cholesky factor of $B$, solves systems using a symmetric factor of
the shifted matrix $A-\sigma B$. With an appropriately chosen shift,
the algorithm is amenable to error analysis and we provide a mixed
forward/backward error bounds for a matrix decomposition for the
shifted and inverted problem. This error analysis also provides useful
residual bounds for the original problem (\ref{eq:sym_gen_eig}). In
the remainder of the introduction, we establish notation and survey
existing methods.

We assume here that $A$ and $B$ are real and symmetric, but the
problem is not substantially different if they are complex and
Hermitian. It is necessary to assume that
\begin{equation*}
  \nullspace(A) \cap \nullspace(B) = \left\{ \vec{0} \right\},
\end{equation*}
where $\nullspace(A)$ denotes the null space of $A$, since otherwise
every choice of $\lambda$ is a solution of \eqref{eq:sym_gen_eig} for
some choice of $\vec{v}$.  If $\vec{v}$ is a null vector of $B$, then
we identify it as a generalized eigenvector for the generalized
eigenvalue $\lambda = \infty$.  We also assume throughout the paper
that $A\neq 0$ and $B\neq 0$.

Problem \eqref{eq:sym_gen_eig} can be reformulated as
\begin{equation}
  \label{eq:sym_gen_eig2}
  \beta A \vec{v} = \alpha B \vec{v}, \qquad \vec{v}\neq \vec{0},
\end{equation}
which puts $A$ and $B$ on a more equal footing, although we still
require that $B$ specifically be semidefinite. In this formulation we
refer to the pair $(\alpha, \beta)$ as a generalized eigenvalue for
the matrix pair $(A, B)$. The generalized eigenvalues from
\eqref{eq:sym_gen_eig} are given by $\lambda = \alpha/\beta$, with
infinite generalized eigenvalues having the form $(\alpha, 0)$. We
favor \eqref{eq:sym_gen_eig2} in the algorithm and in the final error
bounds, but switch between \eqref{eq:sym_gen_eig} and
\eqref{eq:sym_gen_eig2} whenever it is convenient.  Where the context
is clear we omit ``generalized'' and refer simply to eigenvalues and
eigenvectors.

The symmetric definite generalized eigenvalue problem is known to have
real eigenvalues. Applying a congruence to both $A$ and $B$ preserves
both eigenvalues and symmetry. Given a diagonalizing congruence for
which
\begin{equation*}
  V^\T A V = D_\alpha \eqand V^\T B V = D_\beta,
\end{equation*}
the eigenvalues $\lambda$ are the diagonal elements of
$D_\beta^{-t} D_\alpha$ and the eigenvectors are the columns of $V$.
Such a congruence can be shown to exist \cite{parl:80,stew:01} with a
somewhat weaker assumption than positive definiteness of $B$
\cite{govl:13,laro:05}, although positive semidefiniteness of $B$ is
not by itself sufficient. Unfortunately, even when it exists, the
diagonalizing congruence will not in general be orthogonal, which means
that we cannot expect to compute such a decomposition through the
stable application of rotations or reflectors.

The most commonly used method for positive definite $B$ computes a
diagonalizing congruence starting with the Cholesky factorization
$B = C_b C_b^\T$. We can then compute an eigenvalue decomposition
$C_b^{-\T} A C_b^{-1}= U\Lambda U^\T$ so that we have congruences
\begin{equation*}
U^\T C_b^{-1} A C_b^{-\T} U = \Lambda, \eqand 
U^\T C_b^{-1} B C_b^{-\T} U = I.
\end{equation*}
The eigenvectors are then the columns of $V = C_b^{-\T} U$. The
apparent inverses, and all references to applying inverses in this
paper, are short-hand for solving linear systems in an appropriately
stable way. There are some nontrivial aspects to exploiting symmetry
in forming $C_b^{-\T} A C_b$ that are described in \cite{stew:01}. We
refer to this as the standard method for \eqref{eq:sym_gen_eig}. It is
described more fully in \cite{stew:01, wilk:65}. It is simple and
efficient but when $B$ is ill conditioned it comes with no expectation
of stability and is in fact known to give large residuals for
eigenvalues that are of small magnitude relative to $\|\Lambda\|_2$.
If the Cholesky factorization of $B$ is computed with diagonal
pivoting and the Jacobi method is used to compute the decomposition
$U\Lambda U^\T$, then \cite{daht:01} shows that even if $B$ is
ill-conditioned, the computed eigenvalues satisfy backward error
bounds that are often much better than might be expected from
consideration of only the condition number of $B$.  The analysis can
be extended to eigenvalues computed using the $QR$ algorithm if the
initial tridiagonal reduction is performed using plane rotations.

There are a number of other options. The $QZ$ algorithm \cite{most:73}
for the nonsymmetric generalized eigenvalue problem is backward
stable. However it is significantly slower than the standard method
for \eqref{eq:sym_gen_eig} and, when applied to a symmetric problem,
the backward errors are not in general symmetric. As a consequence,
the computed eigenvalues can be complex and specialized perturbation
theory for the symmetric problem \cite{stsu:90} is not
applicable. Throwing out the potentially significant imaginary part of
a sensitive computed eigenvalue does not fix the problem in a stable
way.  The methods of \cite{fihe:72} and \cite{pewi:70b} attempt to
deflate the problem in a way that removes infinite or very large
generalized eigenvalues prior to inversion. Their stability depends on
deflating with a threshold that does not modify the problem
excessively while guaranteeing that only a well conditioned matrix
needs to be inverted to obtain the computed finite eigenvalues. This
involves balancing potentially conflicting requirements for stability
and does not always give satisfactory results.

A stable algorithm was presented in \cite{chand:00} with an error
analysis giving entirely satisfactory bounds on relative
residuals. The stability of the algorithm is unconditional, which is
ideal. However it involves computing an ordinary eigenvalue
decomposition and transformations used in deflation of an eigenvalue
can increase the size of residuals. If a test on residuals fails in
the course of the algorithm, an ordinary eigenvalue decomposition
might need to be recomputed. The number of such decompositions that
need to be computed was reported to be small in the numerical tests,
and there were arguments given as to why this should be
so. Nevertheless there is some uncertainty about how many eigenvalue
decompositions are needed.

Finally, and of particular importance to the approach taken here,
there is substantial previous work on the sparse problem. The Lanczos
algorithm can in principle be applied to $C_b^{-\T} A C_b^{-1}$ to
solve \eqref{eq:sym_gen_eig} while exploiting sparsity. However, this
poses numerical difficulties that are similar to those encountered
when computing a full eigenvalue decomposition of
$C_b^{-\T} A C_b^{-1}$. The spectral transformation Lanczos
method~\cite{erru:80, grls:94} avoids these issues by applying the
Lanczos method to $C_b^\T (A - \sigma B)^{-1} C_b$.  The merits of
this algorithm, and a variant, were further described in
\cite{npej:87}.  The main contribution of this paper is the
observation that with a suitable shift, this same transformation
conditionally stabilizes a simple direct method for
\eqref{eq:sym_gen_eig}.  In terms of stability and efficiency, the
algorithm inhabits a middle ground between the standard Cholesky based
algorithm and that of \cite{chand:00}.  Relative to \cite{chand:00},
we compromise on stability in having conditions on the shift attached
to bounds on the magnitude of residuals.  However, the algorithm is
easily implemented using factorizations already implemented in LAPACK
using level-3 BLAS operations and is much closer in cost to the
Cholesky-based method. Furthermore, it is typically possible to choose
a single moderately sized shift that will work well for all computed
eigenvalues. However for computed eigenvectors and shifts of large
magnitude, there are additional restrictions related to the magnitude
of the shift relative to the magnitudes of the eigenvalues of
interest.

In the remainder of the paper, we derive the algorithm, prove error
bounds, and present the results of numerical experiments that
illustrate the key features of the bounds. In \S\ref{sec:algorithm} we
give a full description of the algorithm, which is motivated by a
simple lemma that later provides a basis for the error analysis. We
also highlight other properties of the algorithm that will be of
importance for the analysis. A forward/backward stability result for a
matrix decomposition for the shifted and inverted problem analysis is
given in \S\ref{sec:error-analysis}. In \S\ref{sec:small_sigma0} and
\S\ref{sec:large_sigma0} we give bounds for residuals associated with
the problem \eqref{eq:sym_gen_eig2} in the distinct cases of moderate
and large shifts. The results of the numerical experiments are given
in \S\ref{sec:numerical-experiments}. Results are summarized in
\S\ref{sec:summary}.

\section{Derivation of the Algorithm}
\label{sec:algorithm}
\pdfbookmark[0]{Derivation of the Algorithm}{bk:algorithm}

We begin by choosing a shift $\sigma$ for which $A-\sigma B$ is
nonsingular and computing two decompositions
\begin{equation}
  \label{eq:initial_decomp}
  A-\sigma B = C_a D_a C_a^\T, \eqand B = C_b C_b^\T,
\end{equation}
where $D_a$ is diagonal with diagonal elements equal to $\pm 1$ and
$\|C_a\|_2$ is not much larger than $\|A-\sigma B\|_2^{1/2}$.  We
require that both decompositions have a small backward error.  The
Cholesky factorization is the obvious choice for $B$ when it is
positive definite.  However, to allow for the possibility that $B$ is
semidefinite, we allow $C_b$ to be $n\times r$ where $r\leq n$ and the
columns of $C_b$ are linearly independent.  If a rank revealing
decomposition is used with sufficiently small truncation tolerance,
then errors from truncation are comparable to backward errors from
rounding.  If $B$ is semidefinite, a pivoted Cholesky decomposition as
computed by \verb|xPSTRF| in LAPACK is efficient and is what is used
in the numerical experiments.

For $A - \sigma B$, an obvious choice is to start with a pivoted
$LDL^\T$ factorization, where $D$ has $1 \times 1$ and $2 \times 2$
blocks on the diagonal. If $A - \sigma B = P LDL^\T P^\T$, then we can
compute an eigenvalue decomposition $D = QD_\sigma D_a D_\sigma Q^\T$,
where $Q$ is orthogonal and block diagonal with $1 \times 1$ and
$2 \times 2$ diagonal blocks, $D_\sigma$ has positive diagonal
elements, and $D_a$ has $\pm 1$ on the diagonal. This decomposition of
$D$ can be computed by individually computing eigenvalue
decompositions of the $2 \times 2$ diagonal blocks, factoring out the
signs in $D_a$ and taking square roots to get $D_\sigma$. We then have
$C_a = P LQD_\sigma$ and $A - \sigma B = C_a D_a C_a^\T$. This
decomposition is stable if the $LDL^\T$ decomposition is backward
stable and the eigenvalue decompositions of the $2 \times 2$ blocks
are backward stable. There is a fine point related to the pivoting
scheme used and the magnitude of $\|C_a\|_2$. If $\|L\|_2$ can be
bounded suitably, it is easy to show that $\|C_a\|_2$ is not much
larger than $\|A - \sigma B\|_2^{1/2}$. However Bunch-Kaufman partial
pivoting \cite{buka:77} does not guarantee a bound on $\|L\|_2$
\cite{high:97,high:02}. A stronger pivoting scheme is needed to give a
useful bound on $\|C_a\|_2$ for the error analysis in
\S\ref{sec:error-analysis}. Either Bunch-Parlett complete pivoting
\cite{bupa:71} or rook pivoting \cite{asgl:98} should be used.

Given the decomposition of $B$, we can apply a shift and invert
spectral transformation as described in the following lemma, the
contents of which were used without being formally stated in
\cite{erru:80}.

\begin{lemma}
 \label{lm:spectral_transformation}
 Let $A - \sigma B$ be nonsingular and $B = C_bC_b^\T$, where $C_b$ is
 $n \times r$ and might have linearly dependent columns. Assume that
 $\lambda\neq\infty$ and $\vec{v} \neq \vec{0}$ satisfy
 \eqref{eq:sym_gen_eig}.  Then $\theta = 1/(\lambda - \sigma)$ is an eigenvalue of
 the problem 
 \begin{equation}
   \label{eq:spectral_trans_eig}
   C_b^\T (A-\sigma B)^{-1} C_b \vec{u} = \theta \vec{u}, \qquad \vec{u}\neq \vec{0}
 \end{equation}
 with eigenvector $\vec{u} = C_b^\T \vec{v}\neq \vec{0}$.  

 Conversely, assume that $\vec{u}\neq \vec{0}$ is an eigenvector for
 \eqref{eq:spectral_trans_eig} with eigenvalue $\theta$. If
 $C_b \vec{u} \neq \vec{0}$, then the vector
 $\vec{v} = (A - \sigma B)^{-1} C_b\vec{u}\neq 0$ is an eigenvector for
 \eqref{eq:sym_gen_eig2} with eigenvalue
 $(1 + \sigma \theta, \theta)$. In this case, with $\vec{v}$ defined in
 this way, we have $C_b^\T \vec{v} = \theta \vec{u}$.  If instead we
 have $C_b \vec{u} = \vec{0}$, then $\theta = 0$ and $(1, 0)$ is an
 eigenvalue for \eqref{eq:sym_gen_eig2} with eigenvector given by
 $\vec{v} = \vec{u}$. If $C_b$ is $n\times n$ and invertible, then we
 have $C_b \vec{u} \neq \vec{0}$ and can use the alternate formula
 $\vec{v} = C_b^{-\T} \vec{u}$ to obtain an eigenvector of
 \eqref{eq:sym_gen_eig2}.
\end{lemma}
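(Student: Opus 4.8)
The plan is to prove the lemma by direct verification in each direction, treating the forward implication first and then the converse case-by-case.

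\medskip

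\noindent\textbf{Forward direction.} Suppose $\lambda\neq\infty$ and $\vec{v}\neq\vec{0}$ satisfy $A\vec{v}=\lambda B\vec{v}$. First I would rewrite this as $(A-\sigma B)\vec{v}=(\lambda-\sigma)B\vec{v}=(\lambda-\sigma)C_bC_b^\T\vec{v}$. Since $A-\sigma B$ is nonsingular and $\lambda-\sigma\neq 0$ (note $\lambda=\sigma$ would force $B\vec{v}$ to lie in $\nullspace(A-\sigma B)=\{\vec 0\}$, hence $B\vec v=\vec 0$ and $A\vec v=\vec 0$, contradicting $\nullspace(A)\cap\nullspace(B)=\{\vec 0\}$), we may apply $(A-\sigma B)^{-1}$ and multiply by $C_b^\T$ to get $\theta\, C_b^\T\vec{v}=C_b^\T(A-\sigma B)^{-1}C_bC_b^\T\vec{v}$ with $\theta=1/(\lambda-\sigma)$. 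Setting $\vec{u}=C_b^\T\vec{v}$ this is exactly \eqref{eq:spectral_trans_eig}. It remains to show $\vec{u}\neq\vec 0$: if $C_b^\T\vec v=\vec 0$ then $B\vec v=C_bC_b^\T\vec v=\vec 0$, so $(A-\sigma B)\vec v=\vec 0$, forcing $\vec v=\vec 0$, a contradiction.

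\medskip

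\noindent\textbf{Converse direction.} Now suppose $C_b^\T(A-\sigma B)^{-1}C_b\vec{u}=\theta\vec{u}$ with $\vec{u}\neq\vec 0$. Split into the two cases. If $C_b\vec u\neq\vec 0$, set $\vec v=(A-\sigma B)^{-1}C_b\vec u$, which is nonzero since $A-\sigma B$ is invertible. Then $(A-\sigma B)\vec v=C_b\vec u$, so $A\vec v=\sigma B\vec v+C_b\vec u=\sigma C_bC_b^\T\vec v+C_b\vec u$. The eigenvalue equation says $C_b^\T\vec v=\theta\vec u$, which substitutes to give $A\vec v=\sigma\theta C_b\vec u+C_b\vec u=(1+\sigma\theta)C_b\vec u$. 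Meanwhile $B\vec v=C_bC_b^\T\vec v=\theta C_b\vec u$. Hence $\theta A\vec v=(1+\sigma\theta)\theta C_b\vec u=(1+\sigma\theta)B\vec v$, which is \eqref{eq:sym_gen_eig2} with $(\alpha,\beta)=(1+\sigma\theta,\theta)$; the relation $C_b^\T\vec v=\theta\vec u$ was already noted. If instead $C_b\vec u=\vec 0$, then the left side of \eqref{eq:spectral_trans_eig} is zero, so $\theta\vec u=\vec 0$, and since $\vec u\neq\vec 0$ we get $\theta=0$. Taking $\vec v=\vec u$, we have $B\vec v=C_bC_b^\T\vec u$; here I would observe that $C_b\vec u=\vec 0$ does not immediately give $C_b^\T\vec u=\vec 0$, so instead argue as follows: from $C_b\vec u=\vec 0$ we get $C_bC_b^\T(A-\sigma B)^{-1}C_b\vec u=\vec 0$ trivially, but more directly, set $\vec v=\vec u$ and note $B\vec v=C_b(C_b^\T\vec u)$. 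To conclude that $(1,0)$ is an eigenvalue with this eigenvector we need $B\vec v=\vec 0$, i.e. $C_b^\T\vec u\in\nullspace(C_b)$; this actually does follow because $\nullspace(C_b)^\perp=\range(C_b^\T)$ and $C_b\vec u=\vec 0$ means $\vec u\in\nullspace(C_b)$, hence $C_b^\T\vec u\perp\nullspace(C_b)$... I will need to check this step carefully — the cleanest route is that $\|B\vec u\|$ relates to $C_b^\T\vec u$ via $\vec u^\T B\vec u=\|C_b^\T\vec u\|^2$ and $C_b\vec u=\vec 0\Rightarrow \vec u^\T C_b=\vec 0\Rightarrow\vec u^\T C_bC_b^\T\vec u=0\Rightarrow C_b^\T\vec u=\vec 0$, so in fact $B\vec u=\vec 0$ after all. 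Then $0\cdot A\vec u=1\cdot B\vec u$ holds, giving the claimed eigenpair. Finally, if $C_b$ is $n\times n$ and invertible, then $C_b\vec u\neq\vec 0$ automatically, and $\vec v=(A-\sigma B)^{-1}C_b\vec u$ satisfies $C_b^\T\vec v=\theta\vec u$, so when $\theta\neq 0$ we may replace $\vec v$ by $\theta^{-1}\vec v=\theta^{-1}(A-\sigma B)^{-1}C_b\vec u$; rewriting, $\theta^{-1}(A-\sigma B)^{-1}C_b\vec u$ has $C_b^\T$ of it equal to $\vec u$, i.e. $C_b^\T\vec v=\vec u$ gives $\vec v=C_b^{-\T}\vec u$, and scaling an eigenvector does not change the eigenpair — I would verify that the case $\theta=0$ cannot occur when $C_b$ is invertible since $\theta=0$ forced $C_b\vec u=\vec 0$.

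\medskip

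\noindent\textbf{Main obstacle.} The only genuinely delicate point is the bookkeeping in the $C_b\vec u=\vec 0$ branch — making sure one correctly deduces $B\vec u=\vec 0$ (equivalently $C_b^\T\vec u=\vec 0$) from $C_b\vec u=\vec 0$ via the identity $\vec u^\T C_bC_b^\T\vec u=0$, rather than conflating $\nullspace(C_b)$ with $\nullspace(C_b^\T)$ for a non-square $C_b$. Everything else is a routine chain of substitutions using nonsingularity of $A-\sigma B$, the factorization $B=C_bC_b^\T$, and the eigenvalue relation $C_b^\T\vec v=\theta\vec u$; the nonvanishing assertions all reduce to invertibility of $A-\sigma B$ together with the standing assumption $\nullspace(A)\cap\nullspace(B)=\{\vec 0\}$.
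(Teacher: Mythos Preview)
Your overall strategy matches the paper's: shift, invert, and track the eigenvector through $B=C_bC_b^\T$. The forward direction and the main converse case $C_b\vec{u}\neq\vec{0}$ are essentially the paper's argument and are fine.

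The gap is in the branch $C_b\vec{u}=\vec{0}$, which you yourself flag as the delicate point. Your key step ``$C_b\vec{u}=\vec{0}\Rightarrow\vec{u}^\T C_b=\vec{0}$'' is false: transposing $C_b\vec{u}=\vec{0}$ yields $\vec{u}^\T C_b^\T=\vec{0}$, not $\vec{u}^\T C_b=\vec{0}$. So the chain $\vec{u}^\T C_bC_b^\T\vec{u}=0\Rightarrow C_b^\T\vec{u}=\vec{0}\Rightarrow B\vec{u}=\vec{0}$ collapses. Concretely, with $r=n=2$, $C_b=\bigl[\begin{smallmatrix}1&0\\1&0\end{smallmatrix}\bigr]$ and $\vec{u}=(0,1)^\T$, one has $C_b\vec{u}=\vec{0}$ but $B\vec{u}=C_bC_b^\T\vec{u}=(1,1)^\T\neq\vec{0}$. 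The paper's own proof simply asserts ``in this case $B\vec{u}=\vec{0}$'' without further argument, so it is no more careful on this point; but your attempted justification makes the error explicit rather than resolving it. Note also that for $r<n$ the assignment $\vec{v}=\vec{u}$ is dimensionally ill-posed, so this branch of the lemma is only meaningful in restricted circumstances anyway.

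A minor second point: in the invertible-$C_b$ case you write that ``$\theta=0$ forced $C_b\vec{u}=\vec{0}$,'' but you only proved the reverse implication. The clean route, which the paper uses, is that invertibility of $C_b$ and of $A-\sigma B$ makes $C_b^\T(A-\sigma B)^{-1}C_b$ invertible, whence $\theta\neq 0$ directly.
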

\begin{proof}
  We assume that $\lambda \neq \infty$ and $\vec{v} \neq \vec{0}$
  satisfy \eqref{eq:sym_gen_eig}. If we shift \eqref{eq:sym_gen_eig},
  then we have
  $(A - \sigma B)\vec{v} = (\lambda - \sigma )C_b C_b^\T \vec{v}$. The
  fact that $\sigma$ is not an eigenvalue of $(A, B)$ ensures that
  $\lambda - \sigma \neq 0$ so that invertibility of $A - \sigma B$
  can be used to transform the shifted problem to
  \begin{equation*}
    C_b^\T \vec{v} = (\lambda - \sigma) C_b^{\T} (A-\sigma B)^{-1} C_b C_b^\T \vec{v}
  \end{equation*}
  or 
  \begin{equation*}
    \frac{1}{\lambda - \sigma} \vec{u} = C_b^{\T} (A-\sigma B)^{-1} C_b \vec{u}
  \end{equation*}
  with $\vec{u} = C_b^\T \vec{v}$. Clearly $\vec{u}\neq \vec{0}$,
  since otherwise the shifted version of \eqref{eq:sym_gen_eig} is
  $(A-\sigma B)\vec{v} = (\lambda - \sigma) C_b C_b^\T \vec{v}=0$,
  which would imply that that $\vec{v}$ is in the null space of
  $(A - \sigma B)$. Thus $1/(\lambda - \sigma)$ is an eigenvalue of
  \eqref{eq:spectral_trans_eig} as claimed.

  Now consider the second part of the lemma and assume that $\vec{u}$
  with $C_b \vec{u} \neq \vec{0}$ is an eigenvector of
  $C_b^\T (A - \sigma B)^{-1} C_b$ for eigenvalue $\theta$.  If we define
  $\vec{v} = (A - \sigma B)^{-1} C_b \vec{u}$, then $\vec{v}\neq \vec{0}$ and
  \begin{equation*}
    \theta (A-\sigma B) \vec{v} = \theta C_b \vec{u} =
    C_b C_b^\T (A-\sigma B)^{-1}C_b \vec{u}=
    B \vec{v},
  \end{equation*}
  which is equivalent to
  $\theta A \vec{v} = (1+\sigma \theta)B \vec{v}$. The claim for the
  case $C_b \vec{u} = \vec{0}$ is immediate from the fact that in this
  case $B\vec{u} = \vec{0}$. If $C_b$ is invertible then
  $\theta \neq 0$ and if we let $\vec{v} = C_b^{-\T} \vec{u}$, then we
  have
  \begin{equation*}
    \theta C_b^{-\T} \vec{u} = (A-\sigma B)^{-1}C_b \vec{u}
  \end{equation*}
  so that the two formulas are equivalent up to scaling.
\end{proof}

When solving a problem of the form \eqref{eq:spectral_trans_eig} where
$C_b$ has fewer than $n$ columns, it is useful to recall that $B$
being positive semidefinite does not guarantee a basis of $n$
generalized eigenvectors. Often $C_b^\T (A - \sigma B)^{-1} C_b$ will
be nonsingular, if possibly ill-conditioned, in which case each
shifted and inverted eigenvalue satisfies $\theta_i \neq \vec{0}$ for
$i=1,\ldots, r$ and the eigenvalues of \eqref{eq:spectral_trans_eig}
give $r$ finite eigenvalues of the form
$\lambda_i = (1 + \sigma \theta_i )/\theta_i$ with eigenvectors given
by $(A-\sigma B)^{-1} C_b \vec{u}_i$. The $n-r$ infinite eigenvalues
have eigenvectors that can be chosen as a basis for the $n - r$
dimensional null space of $B$.

If $C_b^\T (A - \sigma B)^{-1} C_b$ is singular, it is possible to
have $\theta_i = 0$, even when $C_b \vec{u}_i \neq \vec{0}$, in which
case $\lambda_i$ is infinite and the eigenvector computed from
$\vec{v}_i = (A - \sigma B)^{-1} C_b \vec{u}_i$ can be in the null space of
$B$. Consider
\begin{equation}
  \label{eq:null_vec_example}
  A =
  \begin{bmatrix}
    2 & 1 \\
    1 & 0
  \end{bmatrix}, \qquad
  B =
  \begin{bmatrix}
    1 & 1 \\
    1 & 1
  \end{bmatrix} = 
  \begin{bmatrix}
    1 \\ 1
  \end{bmatrix}
  \begin{bmatrix}
    1 & 1
  \end{bmatrix} = C_b C_b^\T.
\end{equation}
With $\sigma = 1$, it is easily seen that
$C_b^\T(A - \sigma B)^{-1}C_b = 0$ so we have $\theta = 0$ with scalar
eigenvector $\vec{u} = 1$. This gives
\begin{equation*}
  \vec{v}=(A-\sigma B)^{-1} C_b \vec{u} =
  \begin{bmatrix}
    1 \\ -1
  \end{bmatrix}
\end{equation*}
which is in the null space of $B$ and could have been identified
simply by looking at $B$ without considering $A$ at all. This is, in
fact, the only eigenvector for this matrix pair, which does not have a
complete basis of eigenvectors and is not diagonalizable.

Based on the lemma, we can solve (\ref{eq:spectral_trans_eig}) to
obtain eigenvalues and eigenvectors of \eqref{eq:sym_gen_eig2}.
Written fully in terms of the decompositions of $A - \sigma B$ and
$B$, the transformed problem \eqref{eq:spectral_trans_eig} becomes
\begin{equation*}
   C_b^\T C_a^{-T} D_a C_a^{-1} C_b \vec{u} = \theta \vec{u}.
\end{equation*}
Defining
\begin{equation}
\label{eq:XW_def}
  X \coloneqq C_a^{-1} C_b,  \eqand W \coloneqq X^\T D_a X,
\end{equation}
we have $W\vec{u} = \theta \vec{u}$.  If $U$ is the eigenvector matrix
for this problem, then the eigenvector matrix for the original problem
can be computed from
\begin{equation*}
  V = C_a^{-\T} D_a C_a^{-1} C_b U = C_a^{-\T} D_a XU,
\end{equation*}
which has linearly independent columns whenever the factor $C_b$ has
linearly independent columns.

Computing $X$ requires solving linear systems involving $C_a$, which
might be ill conditioned. Defining
\begin{equation}
  \label{eq:eta_def}
  \eta \coloneqq \frac{\|A-\sigma B\|_2^{1/2}}{\|B\|_2^{1/2}},
\end{equation}
we will show in the error analysis that this ill conditioning in
forming $X$ is harmless if $\eta\|X\|_2$ is not large. To control the
size of $\eta \|X\|_2$ we make a choice of shift guided by the
following lemma.

\begin{lemma}
  \label{lm:norm_bound}
  Let $A$ be an $n \times n$ nonzero symmetric matrix and $B$ be a
  nonzero $n \times n$ symmetric rank $r > 0$ positive semidefinite
  matrix for which $A - \sigma B$ is invertible. Let $A - \sigma B$
  and $B$ have factorizations given by \eqref{eq:initial_decomp},
  where $C_b$ has linearly independent columns. Let $X$ and $\eta$ be
  defined as in \eqref{eq:XW_def} and \eqref{eq:eta_def}, where
  $X^\T D_a X \neq 0$. Define
  \begin{equation}
    \label{eq:scaled_shift}
    \sigma_0 \coloneqq \sigma \frac{\|B\|_2}{\|A\|_2},
  \end{equation}
  and
  \begin{equation*}
    \mu \coloneqq \frac{\|X\|_2^2}{\|X^\T D_a X\|_2}.
  \end{equation*}
  Then
  \begin{equation*}
    \eta^2 \|X\|_2^2 \leq
    \left(1 + |\sigma_0|\right)
    \frac{\mu}
    {\min_i \left| \sigma_0 - \frac{\|B\|_2\lambda_i}{\|A\|_2}\right|} =
    \left(1 + \frac{1}{|\sigma_0|}\right)
    \frac{\mu}
    {\min_i \left| 1 - \frac{\lambda_i}{\sigma}\right|},
  \end{equation*}
  where each minimum is taken over all finite generalized eigenvalues
  $\lambda_i$ of $(A,B)$ and the second form of the bound assumes that $\sigma \neq 0$.
\end{lemma}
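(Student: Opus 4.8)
The plan is to reduce the inequality to a statement about the spectrum of the symmetric matrix $W = X^\T D_a X$. First I would note that, since the diagonal entries of $D_a$ are $\pm 1$, we have $D_a^{-1} = D_a$, so $A - \sigma B = C_a D_a C_a^\T$ gives $(A-\sigma B)^{-1} = C_a^{-\T} D_a C_a^{-1}$ and hence $W = C_b^\T (A - \sigma B)^{-1} C_b$, the matrix of the transformed problem~\eqref{eq:spectral_trans_eig}. I would then rewrite the target quantity, using $\eta^2 = \|A-\sigma B\|_2/\|B\|_2$ and $\mu = \|X\|_2^2/\|W\|_2$ (legitimate since $\|W\|_2 = \|X^\T D_a X\|_2 \neq 0$ by hypothesis), as
\[
  \eta^2 \|X\|_2^2 = \frac{\|A-\sigma B\|_2}{\|B\|_2}\,\mu\,\|W\|_2 .
\]

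The crux is to evaluate $\|W\|_2$, and this is where Lemma~\ref{lm:spectral_transformation} does the work. Since $W$ is symmetric, $\|W\|_2$ equals its spectral radius, so it is enough to locate the eigenvalues of $W$. By the forward direction of Lemma~\ref{lm:spectral_transformation}, each finite generalized eigenvalue $\lambda_i$ of $(A,B)$ yields the (nonzero) eigenvalue $1/(\lambda_i - \sigma)$ of $W$; by the converse direction, every nonzero eigenvalue of $W$ is of this form for some finite $\lambda_i$. Hence the nonzero eigenvalues of $W$ are exactly $\{1/(\lambda_i - \sigma)\}$ taken over the finite $\lambda_i$; the hypothesis $W \neq 0$ makes this set nonempty and invertibility of $A - \sigma B$ gives $\lambda_i \neq \sigma$, so
\[
  \|W\|_2 = \frac{1}{\min_i |\lambda_i - \sigma|}.
\]

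It then remains to handle scalars. I would bound $\|A-\sigma B\|_2 \le \|A\|_2 + |\sigma|\,\|B\|_2$, substitute, and rescale by writing $\sigma = \sigma_0\,\|A\|_2/\|B\|_2$, so that $|\lambda_i - \sigma| = (\|A\|_2/\|B\|_2)\,\bigl|\sigma_0 - \|B\|_2\lambda_i/\|A\|_2\bigr|$; checking that $(\|A\|_2/\|B\|_2 + |\sigma|)\cdot\|B\|_2/\|A\|_2 = 1 + |\sigma_0|$ then produces the first claimed form. The second form follows by pulling $|\sigma_0|$ out of the minimum, using $\|B\|_2/(\sigma_0\|A\|_2) = 1/\sigma$ when $\sigma \neq 0$ together with $(1 + |\sigma_0|)/|\sigma_0| = 1 + 1/|\sigma_0|$. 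None of this arithmetic is delicate; the only step that needs care is the spectral characterization of $W$ — in particular, being sure the rank-deficient case (where $C_b$ has fewer than $n$ columns and $W$ may be singular, carrying a spurious zero eigenvalue) causes no trouble, which is precisely what the assumption $X^\T D_a X \neq 0$ and both directions of Lemma~\ref{lm:spectral_transformation} guarantee.
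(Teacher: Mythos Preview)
Your proposal is correct and follows essentially the same route as the paper: identify $W=X^\T D_a X$ with the spectral-transformation matrix, use Lemma~\ref{lm:spectral_transformation} to bound its spectral radius by $1/\min_i|\lambda_i-\sigma|$, then push through the scalar bookkeeping with $\|A-\sigma B\|_2\le\|A\|_2+|\sigma|\,\|B\|_2$ and the definition of $\sigma_0$. The only cosmetic difference is that you argue for equality $\|W\|_2=1/\min_i|\lambda_i-\sigma|$ (using both directions of Lemma~\ref{lm:spectral_transformation}), whereas the paper is content with the inequality $\le$, which is all the lemma needs.
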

\begin{proof}
  Lemma~\ref{lm:spectral_transformation} implies that every eigenvalue
  of $\theta$ of $X^\T D_a X$ is of the form $1/|\lambda - \sigma|$
  where $\lambda$ is a generalized eigenvalue of $(A, B)$, with
  $\lambda = \infty$ if $\theta = 0$. Thus
  \begin{equation*}
    \|X^\T D_a X\|_2 = \rho\left(X^\T D_a X\right)
    \leq\frac{1}{\min_i |\lambda_i -\sigma |}
  \end{equation*}
  and
  \begin{align*}
    \eta^2 \|X\|_2^2 
    & \leq
      \frac{\|A-\sigma B\|_2}{\|B\|_2} \cdot
      \frac{\mu}
      {\min_i \left| \sigma - \frac{\lambda_i}{\sigma}\right|} \\
    & \leq
      \frac{\|A\|_2+ |\sigma_0| \|A\|_2}{\|B\|_2}
      \frac{\mu}
      {\min_i \left| \sigma - \lambda_i\right|}
      =
      (1+ |\sigma_0|)
      \frac{\mu}
      {\min_i \left| \sigma_0 - \frac{\|B\|_2 \lambda_i}{\|A\|_2}\right|}.
  \end{align*}
  The second form of the upper bound follows by factoring out $|\sigma_0|$ from the
  expression being minimized.
\end{proof}

In the proof of the lemma we have used the bound
\begin{equation}
\label{eq:shifted_A_inequality}
  \|A-\sigma B\|_2 \leq (1+|\sigma_0|) \|A\|_2,
\end{equation}
which will be of later use in the error analysis.

We refer to $\sigma_0$ as the {\em scaled shift} and
$\|B\|_2\lambda_i/\|A\|_2$ as a {\em scaled eigenvalue}.  The scaling
makes the eigenvalue and choice of shift independent of the scaling of
$A$ and $B$.  The second bound in the lemma suggests that if
$|\sigma_0|$ is not small, $\mu$ is not large, and no eigenvalue
$\lambda_i$ is too close in a relative sense to the shift $\sigma$,
then $\eta\|X\|_2$ will be of moderate size.  The first bound makes it
apparent that small $|\sigma_0|$ is not necessarily a problem if no
scaled eigenvalue is close to the scaled shift in an absolute sense.

We focus on the second bound.  Our standard for what counts as
$\lambda_i$ being “too close” to $\sigma$ is very forgiving in
practice.  We do not necessarily need to identify a substantial gap in
eigenvalues in which to place $\sigma$.  For example, suppose that we
are unlucky and the chosen $\sigma$ agrees with some $\lambda_i$ to
about 3 digits.  We might then have
$|1 - \lambda_i /\sigma | = 10^{-3}$. If $\mu = 10$ and
$\sigma_0 = 2$, then the inequality implies that
$\eta\|X\|_2\leq 122.5$. The quantity $\eta\|X\|_2$ appears as a
factor in residual bounds proven in later sections.  We might expect
to see the residual norms increase by about two orders of magnitude in
this example.  However, our numerical experiments suggest that the
bounds are pessimistic and one would likely need to make an even worse
choice of shift to see a loss of stability.  We have observed
significant problems only when computing eigenvalues in advance and
deliberately placing the shift very close to an eigenvalue.

Note that
\begin{equation*}
  \|X^\T D_a X\|_2 = \rho(X^\T D_a X) = \rho(D_aXX^\T) \leq \|X\|_2^2
\end{equation*}
so that $\mu \geq 1$. If $A - \sigma B$ is positive definite, then
$D_a = I$ and $\mu = 1$. In general we do not have direct control over
the size of $\mu$. Nevertheless, unless there is dramatic cancellation
in the product $\|X^\T D_a X\|_2$, we expect $\mu$ to be of moderate
size. Numerical experiments suggest that this is usually the case. The
problem \eqref{eq:null_vec_example} is an extreme example of exactly
the cancellation we wish to avoid.  It is easily seen that in this case
\begin{equation*}
  C_a = I, \qquad
  X =
  \begin{bmatrix}
    1 \\ 1
  \end{bmatrix}, \eqand
  D_a = A-\sigma B =
  \begin{bmatrix}
    1 & 0 \\
    0 & -1
  \end{bmatrix}
\end{equation*}
so that $X^\T D_a X = 0$. However, we also have $\eta^2 = 1/2$ so that
$\eta\|X\|_2 = 1$. We have been able to construct a problem for which
$\mu$ being large leads compromises stability, but it required some
effort. Furthermore, the cancellation that gives large $\mu$ depends on
the shift and a different choice of shift can reduce $\mu$. In
practice, the magnitude of $\eta\|X\|_2$ seems to be primarily
governed by the minimum value of $|1 - \lambda_i/\sigma |$ for most
problems and most choices of shift.

Putting everything together we have
Algorithm~\ref{alg:spectral_trans}.  In the derivation of the
algorithm, we have mentioned a number of assumptions necessary to
ensure stability of Algorithm~\ref{alg:spectral_trans}.  It is useful
to summarize them for later reference before moving on to the error
analysis.
\begin{enumerate}
\item The factorization algorithm that computes $C_a$ should guarantee
  that $\|C_a\|_2$ is not substantially larger than
  $\|A - \sigma B\|_2^{1/2}$.
\item The factorization algorithms used to compute $C_b$, $C_a$, $U$,
  and $\Theta$ should be backward stable, as should the solution of
  $C_a X = C_b$ for each individual column of $X$. The computed
  product $D_a X$ will be exact and the further product $X^\T (D_a X)$
  should be symmetric and satisfy standard error bounds for matrix
  multiplication. The multiplication $D_a X U$ and the solution of
  $C_a^\T V = D_a X U$ should be similarly stable for each column
  $\vec{v}_i$.\footnote{We include the qualification ``for each column''
    because the solution of linear systems with multiple right-hand
    sides is not in general backward stable.}
\item The shift $\sigma$ should be chosen so that $\eta\|X\|_2$ is not
  large. This is done in accordance with Lemma~\ref{lm:norm_bound} and
  if $\sigma_0$ is not too small is in most cases achieved if $\sigma$
  is not too close to an eigenvalue in a relative sense. The size of
  $\mu$ might also be a concern, but we have not observed problems in
  practice. In any event, $\mu$ is introduced solely to provide the
  bound in Lemma~\ref{lm:norm_bound} and has no impact on the error
  bounds other than in its possible effect on $\eta\|X\|_2$.
\end{enumerate}
We refer to these as the standard assumptions for
Algorithm~\ref{alg:spectral_trans}. The extent to which violating any
of these assumptions impacts the error bounds is explicit in the
bounds given in the following sections. In addition to these
assumptions, some bounds involve error terms proportional to the
magnitude of the scaled shift $\sigma_0$ , so that it is in some cases
useful to assume that this quantity is not too large. However this
should not be interpreted as a general ban on large scaled shifts. We
provide other useful residual bounds when $|\sigma_0|$ is large.

\begin{algorithm}
\caption{Spectral Transformation for \eqref{eq:sym_gen_eig}}
\label{alg:spectral_trans}
\begin{algorithmic}
\Require $A=A^\T$, $B=B^\T$, and that $B$ is positive definite or semidefinite
\Require $\eta x\_\mathrm{max} > 0$ and $\eta x\_\mathrm{max}\not\gg 1$.
\Require $\sigma\in \R$ is not too close to a generalized eigenvalue..
\Function{SpectralTransEig}{$A, B, \sigma, \eta x\_\mathrm{max}$}
\State $\breve{A} \gets A - \sigma B$
\State Factor: $\breve{A} = C_a D_a C_a^\T$
\State Factor: $B = C_b C_b^\T$
\State $\eta \gets \|A-\sigma B\|_2^{1/2} / \|B\|_2^{1/2}$
\State Solve $C_a X = C_b$ for $X$
\If{$\eta \|X\|_2 > \eta x\_\mathrm{max}$}
    \State \Return ``Error: The given $\sigma$ failed to provide a suitable bound on 
    $\eta \|X\|_2$.''
\Else
    \State $W \gets X^\T D_a X$
    \State Factor: $W = U \Theta U^\T$
    \State Solve: $C_a^{\T} V = D_a XU$ for $V$
    \State $\vec{\theta} \gets \mbox{diag}(\Theta)$
    \State $\vec{\alpha} \gets 1 \, {.+}\, \sigma \vec{\theta}$
    \State $\vec{\beta} \gets \vec{\theta}$
\EndIf
\State \Return $(V, \vec{\alpha}, \vec{\beta})$
\EndFunction
\end{algorithmic}
\end{algorithm}

Before moving on to the analysis, we note that some error bounds have
terms with
\begin{equation}
  \label{eq:gamma_def}
  \gamma \coloneqq \frac{\|A\|_2}{\|A-\sigma B\|_2}
\end{equation}
as a factor. This can be taken as a measure of cancellation of $A$
when forming $A - \sigma B$. In an arbitrary matrix norm $\|\cdot\|$
we have
\begin{equation*}
  \frac{\|A\|}{\|A-\sigma B\|}
   \leq \frac{\|A\|}{| \|A\| - |\sigma| \|B\| |}
   = \frac{1}{| 1 - |\sigma_0||},
\end{equation*}
so that $\gamma$ can be large only if $|\sigma_0|$ is close to one,
although in most cases cancellation will not occur even when
$|\sigma_0|$ is close to one.  In deriving other bounds we will
encounter $\gamma|\sigma_0|$ as a factor.  This can be bounded in
terms of $\gamma$ independently of $|\sigma_0|$.  If
$|\sigma_0| \leq 2$, then clearly $\gamma |\sigma_0| \leq 2\gamma$.
If $|\sigma_0| > 2$, then
\begin{equation*}
  \gamma |\sigma_0| 
  = |\sigma_0| \frac{\|A\|_2}{\|A-\sigma B\|_2}
  \leq \frac{|\sigma_0|}{| 1 - |\sigma_0||} =
  \frac{1}{| 1 - 1/|\sigma_0||} < 2.
\end{equation*}
Thus
\begin{equation}
  \label{eq:gamma_sigma0_bound}
  \gamma |\sigma_0| \leq 2 \max(\gamma, 1).
\end{equation}
Thus simply choosing a shift for which $|\sigma_0|$ is not too close
to one is sufficient to control the size of both $\gamma$ and
$\gamma |\sigma_0|$.  Since the common matrix norms are equivalent up
to multiplication by moderately growing functions of $n$, the choice
of the 2-norm in defining $\gamma$ and $\sigma_0$ does not have any
substantial impact on when these quantities will be large.  For
example, to control the size of $\gamma$ and $\gamma |\sigma_0|$ it is
sufficient to require that $\sigma \|B\|_\infty/ \|A\|_\infty$ not be
too close to one.  The emphasis on the matrix 2-norm in this paper,
including in Algorithm~\ref{alg:spectral_trans}, is solely for
convenience in the analysis.  It has minimal impact on the error
bounds in later sections.  In computational practice, other norms are
likely to be more convenient.

\section{Decomposition Errors}
\label{sec:error-analysis}
\pdfbookmark[0]{Decomposition Errors}{bk:decomp_errors}

Let $u$ be the unit roundoff. The error analysis is purely first order
so that we freely ignore terms that are $O(u^2)$. The notation
$\fl(\cdot)$ indicates the result of computing an expression with
roundoff errors. Unless otherwise noted, variables referring to
matrices, vectors, and scalars in Algorithm~\ref{alg:spectral_trans}
(for example $X$) refer to computed quantities. The main result of
this section is the following theorem.

\begin{theorem}
  \label{th:backward_forward_errors}
  In Algorithm~\ref{alg:spectral_trans}, assume that
  $\breve{A} = \fl(A - \sigma B)$ is computed in the obvious
  element-wise way so that $\breve{A}=\breve{A}^\T$. Assume also that
  the computed $C_a$, $C_b$, $X$, $W$, $U$, and $\theta$ satisfy
  \begin{equation}
    \label{eq:A_factor_error}
    \|\breve A - C_a D_a C_a^\T\|_2 \leq u a_n\|\breve{A}\|_2, \qquad
    \|C_a\|_2 \leq b_n \|\breve{A}\|_2^{1/2},
  \end{equation}
  \begin{equation}
    \label{eq:Bfactor_error}
    \|B- C_b C_b^\T\| \leq u c_n \|B\|_2.
  \end{equation}
  \begin{equation}
    \label{eq:Xerror}
    \left\| C_a X - C_b \right\|_2 
    \leq u d_n \|C_a\|_2 \|X\|_2
  \end{equation}
  \begin{equation}
    \label{eq:Werror}
    \|W - X^\T D_a X\|_2 \leq u e_n \|X\|_2^2, \qquad W=W^\T,
  \end{equation}
  \begin{equation}
    \label{eq:W_eig_error}
    \|W - \tilde{U} \Theta \tilde{U}^\T\|_2 \leq u f_n \|W\|_2, \eqand
    \|\tilde{U} - U\|_2 \leq u g_n,
  \end{equation}
  where $\tilde{U}$ is orthogonal and $a_n$, $b_n$, $c_n$, $d_n$,
  $e_n$, $f_n$, $e_n$,and $g_n$ are positive functions of $n$ that
  would be expected not to grow too quickly if stable algorithms are
  used for each individual computation in the algorithm. Let
  $\sigma_0$ be defined by \eqref{eq:scaled_shift}. Then there exist
  $F_1$ and symmetric $\breve{E}$, $E$, $F$ , and $G$ satisfying
  \begin{equation}
    \label{eq:error_relation_B}
    (C_b+F_1) (C_b+F_1)^\T = B + F,
  \end{equation}
  \begin{equation}
    \label{eq:breveE_factorization_errors}
    (A+E) - \sigma (B+F) = A - \sigma B + \breve{E} = C_a D_a C_a^\T
  \end{equation}
  and
  \begin{equation}
    \label{eq:error_relation_mixed}
    (C_b+F_1)^\T (A+E-\sigma (B + F))^{-1} (C_b + F_1)
    = \tilde{U}(\Theta +G)\tilde{U}^\T,
  \end{equation}
  with
  \begin{equation}
    \label{eq:F1_bound}
    \|F_1\|_2 \leq u b_n d_n \eta \|X\|_2 \|C_b\|_2 + O(u^2),
  \end{equation}
  \begin{equation}
    \label{eq:F_bound}
    \|F\|_2
    \leq u \left(c_n + 2 b_n d_n \eta\|X\|_2\right)\|B\|_2 + O(u^2),
  \end{equation}
  \begin{equation}
    \label{eq:Ebreve_bound}
    \|\breve{E}\|_2
    \leq nu \|\sigma B\|_2 + u ( n+a_n ) \|A-\sigma B\|_2 + O(u^2),
  \end{equation}
  \begin{equation}
    \label{eq:E_bound}
    \|E\|_2
    \leq u \left(n(1+2|\sigma_0|) + (1+|\sigma_0|) a_n 
      + |\sigma_0| c_n + 2|\sigma_0| b_nd_n\eta \|X\|_2\right)\|A\|_2 +O(u^2),
  \end{equation}
  and
  \begin{equation}
    \label{eq:G_bound}
    \|G\|_2 \leq u (e_n+f_n) \|X\|_2^2 + O(u^2).
  \end{equation}
\end{theorem}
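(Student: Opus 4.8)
The plan is to process the rounding steps of Algorithm~\ref{alg:spectral_trans} in order, defining the perturbation matrices so that the \emph{exact} identities $C_b + F_1 = C_a X$ and $A + E - \sigma(B+F) = C_a D_a C_a^\T$ hold; with these in hand the product on the left of \eqref{eq:error_relation_mixed} collapses to $X^\T D_a X$, and the only remaining errors, those from forming $W$ and its eigendecomposition, enter through $G$. First I would analyze $\breve{A} = \fl(A - \sigma B)$ entrywise: writing $\fl(a_{ij} - \sigma b_{ij}) = a_{ij}(1 + \epsilon_{ij}) - \sigma b_{ij}(1 + \epsilon_{ij})(1 + \epsilon'_{ij})$ with $|\epsilon_{ij}|, |\epsilon'_{ij}| \le u$ and rearranging gives $\breve{A} = A - \sigma B + \breve{E}^{(0)}$ with $|\breve{E}^{(0)}_{ij}| \le u\,|(A - \sigma B)_{ij}| + u\,|\sigma b_{ij}| + O(u^2)$; converting these entrywise bounds to the $2$-norm with standard inequalities yields $\|\breve{E}^{(0)}\|_2 \le n u \|\sigma B\|_2 + n u \|A - \sigma B\|_2 + O(u^2)$. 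Since $A$, $B$, $\breve{A}$, and $C_a D_a C_a^\T$ are all symmetric, $\breve{E} \coloneqq C_a D_a C_a^\T - (A - \sigma B)$ is symmetric, and \eqref{eq:A_factor_error} together with $\|\breve{A}\|_2 = \|A - \sigma B\|_2 + O(u)$ gives $\|\breve{E}\|_2 \le \|\breve{E}^{(0)}\|_2 + u a_n \|A - \sigma B\|_2 + O(u^2)$, which is \eqref{eq:Ebreve_bound}.

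Next I would set $F_1 \coloneqq C_a X - C_b$, the residual of the solve in \eqref{eq:Xerror}, so that $C_b + F_1 = C_a X$ by construction; then \eqref{eq:Xerror}, the bound $\|C_a\|_2 \le b_n \|\breve{A}\|_2^{1/2}$, and the relations $\|\breve{A}\|_2^{1/2} = \|A - \sigma B\|_2^{1/2} + O(u) = \eta \|B\|_2^{1/2} + O(u) = \eta \|C_b\|_2 + O(u)$ (the last using $\|C_b\|_2^2 = \|C_b C_b^\T\|_2 = \|B\|_2 + O(u)$ from \eqref{eq:Bfactor_error}) give \eqref{eq:F1_bound}. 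Then $F \coloneqq (C_b + F_1)(C_b + F_1)^\T - B$; expanding and discarding $F_1 F_1^\T = O(u^2)$ gives $F = (C_b C_b^\T - B) + C_b F_1^\T + F_1 C_b^\T + O(u^2)$, which is symmetric, and $\|F\|_2 \le u c_n \|B\|_2 + 2 \|C_b\|_2 \|F_1\|_2 + O(u^2)$ reduces to \eqref{eq:F_bound}, so that \eqref{eq:error_relation_B} holds by the definition of $F$. Finally I would put $E \coloneqq \breve{E} + \sigma F$, which is symmetric and satisfies $(A + E) - \sigma(B + F) = A - \sigma B + \breve{E} = C_a D_a C_a^\T$, that is \eqref{eq:breveE_factorization_errors}; the bound \eqref{eq:E_bound} then follows from $\|E\|_2 \le \|\breve{E}\|_2 + |\sigma| \|F\|_2$ by inserting \eqref{eq:Ebreve_bound} and \eqref{eq:F_bound} and using $\|\sigma B\|_2 = |\sigma| \|B\|_2 = |\sigma_0| \|A\|_2$ together with $\|A - \sigma B\|_2 \le (1 + |\sigma_0|)\|A\|_2$ from \eqref{eq:shifted_A_inequality}, after collecting coefficients.

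For \eqref{eq:error_relation_mixed} I would use that $A + E - \sigma(B + F) = C_a D_a C_a^\T$ is nonsingular (the computed $C_a$ being square and nonsingular, and $D_a^{-1} = D_a$), so that its inverse is $C_a^{-\T} D_a C_a^{-1}$, and that $C_b + F_1 = C_a X$, whence the left-hand side equals $X^\T D_a X$ exactly. From \eqref{eq:Werror}, $X^\T D_a X = W + \Delta_W$ with $\|\Delta_W\|_2 \le u e_n \|X\|_2^2$; from \eqref{eq:W_eig_error}, $W = \tilde{U} \Theta \tilde{U}^\T + \Delta_U$ with $\|\Delta_U\|_2 \le u f_n \|W\|_2$ (the bound on $\|\tilde{U} - U\|_2$ plays no role here). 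Hence $X^\T D_a X = \tilde{U}(\Theta + G)\tilde{U}^\T$ with $G \coloneqq \tilde{U}^\T(\Delta_U + \Delta_W)\tilde{U}$, which is symmetric because $\Delta_W = X^\T D_a X - W$ and $\Delta_U = W - \tilde{U}\Theta\tilde{U}^\T$ are symmetric, and $\|G\|_2 = \|\Delta_U + \Delta_W\|_2 \le u f_n \|W\|_2 + u e_n \|X\|_2^2$ combined with $\|W\|_2 \le \|X^\T D_a X\|_2 + O(u) \le \|X\|_2^2 + O(u)$ gives \eqref{eq:G_bound}.

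Most of this is bookkeeping. The step I expect to need the most care is the entrywise analysis of $\fl(A - \sigma B)$, and in particular the clean split of $\breve{E}^{(0)}$ into a part of size $O(u\|A - \sigma B\|_2)$ and a part of size $O(u\|\sigma B\|_2)$: it is precisely this split --- propagated through $E = \breve{E} + \sigma F$ and the relations $\|\sigma B\|_2 = |\sigma_0|\|A\|_2$ and \eqref{eq:shifted_A_inequality} --- that produces the exact coefficients $n(1 + 2|\sigma_0|)$, $(1 + |\sigma_0|)a_n$, and the rest of \eqref{eq:E_bound}. One must also check at each stage that the perturbation is symmetric and that every $O(u)$ correction that is dropped (in $\|C_a\|_2$, $\|C_b\|_2$, $\|W\|_2$, and in $F_1 F_1^\T$) is multiplied by a factor of $u$, so that the discarded terms are genuinely $O(u^2)$.
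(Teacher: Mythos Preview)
Your proposal is correct and follows essentially the same route as the paper's proof: the same entrywise analysis of $\fl(A-\sigma B)$ with the split into $O(u\|A-\sigma B\|_2)$ and $O(u\|\sigma B\|_2)$ parts, the same definitions $F_1=C_aX-C_b$, $F=(C_b+F_1)(C_b+F_1)^\T-B$ (the paper drops $F_1F_1^\T$ from $F$ rather than keeping it, but this is immaterial to first order), $E=\breve{E}+\sigma F$, and $G$ built from the two errors in forming and diagonalizing $W$. Your definition $G=\tilde{U}^\T(\Delta_U+\Delta_W)\tilde{U}$ is in fact slightly tidier than the paper's, which sets $G=G_0+G_1$ and so only makes \eqref{eq:error_relation_mixed} hold up to an orthogonal conjugation; the norm bound is of course the same.
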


The proof of the theorem is given in
Appendix~\ref{sec:proofs-theor-refth:b}.  The bounds assumed by the
theorem do not require specific choices of algorithms for the various
computations performed in Algorithm~\ref{alg:spectral_trans}. However
their form is consistent with standard bounds that can be proven when
using stable algorithms for each individual computation. Specifically,
the bounds correspond to operations that match the assumptions stated
at the end of \S\ref{sec:algorithm}.

If we consider \eqref{eq:error_relation_B} and
\eqref{eq:error_relation_mixed} and compare this result to
Lemma~\ref{lm:spectral_transformation} and the comments made
immediately following the lemma, we see that the exact nonzero
eigenvalues $\hat{\theta}_i$ of $\Theta + G$ are such that each
$(1 + \sigma \hat{\theta}_i, \hat{\theta}_i )$ is an exact generalized
eigenvalue of $(A + E, B + F )$. If $\hat{\theta}_i=0$, then
$(A + E, B + F )$ has an eigenvalue (1, 0), but how the eigenvector
might be computed is dependent on whether
$(C_b + F_1)\vec{v} = \vec{0}$, as described in
Lemma~\ref{lm:spectral_transformation}. In practice, we expect to see
computed $\theta_i$ that are small instead of exactly zero and we will
not focus on handling the case in which a computed $\theta_i$ is
exactly zero. Instead, in later sections, we give residual bounds for
the computed eigenvalues and eigenvectors that hold to first order in
$u$, even for small small $\theta_i$.

In \eqref{eq:breveE_factorization_errors} we have written the errors
in two different ways.  This is because the bound on $\|E\|_2$ depends
on the magnitude of the scaled shift $|\sigma_0|$ while the bound on
$\|\breve{E}\|_2$ does not.  In \S\ref{sec:large_sigma0}, we use the
bounds on $\|\breve{E}\|_2$ to prove a residual bound that is of use
when $|\sigma_0|$ is large.

\section{Eigenvalue Stability with Bounded $\sigma_0$}
\label{sec:small_sigma0}
\pdfbookmark[0]{Stability with Bounded Shift}{bk:bounded_shift}

We now focus on eigenvalues computed when $\sigma_0$ is of moderate
size and show that under the standard assumptions from
\S\ref{sec:algorithm}, if $\theta_i$ is the computed eigenvalue from
Algorithm~\ref{alg:spectral_trans}, then the pair
$(1 + \sigma \theta_i , \theta_i)$ is an eigenvalue of a matrix pair
close to $(A, B)$. In interpreting the significance of
Theorem~\ref{th:backward_forward_errors} for the accuracy of computed
eigenvalues, we use standard perturbation theory for the symmetric
eigenvalue problem and standard results on residuals. The following
theorem, due to Weyl, can be found in \cite{stsu:90}.
\begin{theorem}
  \label{th:perturb}
  If $\Theta$ and $\Theta+G$ are $n\times n$ and Hermitian with
  eigenvalues $\theta_i$ and $\hat{\theta}_i$ arranged in
  nondecreasing order.  Then
  \begin{equation*}
    |\theta_i - \hat{\theta}_{i}| \leq \|G\|_2, \qquad
    \mbox{for $i=1,2,\ldots, n$}.
  \end{equation*}
\end{theorem}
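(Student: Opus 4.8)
The statement is the classical Weyl monotonicity/perturbation bound for Hermitian matrices, and the cleanest route is through the Courant--Fischer min-max characterization of eigenvalues. Since the paper cites it as a known result from \cite{stsu:90}, the expected proof is short; I would give the standard self-contained argument rather than merely quoting the reference.

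First I would recall the Courant--Fischer theorem: for an $n \times n$ Hermitian matrix $M$ with eigenvalues $\theta_1 \le \theta_2 \le \cdots \le \theta_n$, one has
\begin{equation*}
  \theta_i = \min_{\substack{\mathcal{S} \subseteq \C^n \\ \dim \mathcal{S} = i}}
  \ \max_{\substack{\vec{x} \in \mathcal{S} \\ \vec{x} \neq \vec{0}}}
  \frac{\vec{x}\ctp M \vec{x}}{\vec{x}\ctp \vec{x}}.
\end{equation*}
Applying this to $M = \Theta + G$ and to $M = \Theta$, the key step is to compare the two Rayleigh quotients: for every nonzero $\vec{x}$,
\begin{equation*}
  \frac{\vec{x}\ctp (\Theta + G) \vec{x}}{\vec{x}\ctp \vec{x}}
  = \frac{\vec{x}\ctp \Theta \vec{x}}{\vec{x}\ctp \vec{x}}
  + \frac{\vec{x}\ctp G \vec{x}}{\vec{x}\ctp \vec{x}},
\end{equation*}
and since $G$ is Hermitian, $|\vec{x}\ctp G \vec{x}| \le \|G\|_2 \vec{x}\ctp \vec{x}$, so the two Rayleigh quotients differ by at most $\|G\|_2$ pointwise. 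Taking the $\max$ over a fixed subspace $\mathcal{S}$ and then the $\min$ over all $i$-dimensional subspaces preserves this gap, giving $\hat{\theta}_i \le \theta_i + \|G\|_2$. Swapping the roles of $\Theta$ and $\Theta + G$ (equivalently, replacing $G$ by $-G$, which has the same norm) gives $\theta_i \le \hat{\theta}_i + \|G\|_2$, and combining the two inequalities yields $|\theta_i - \hat{\theta}_i| \le \|G\|_2$ for each $i$.

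There is no real obstacle here; the only point requiring a moment's care is the bookkeeping that taking $\min$ and $\max$ of a pair of functions that differ by at most a constant produces $\min$/$\max$ values that also differ by at most that constant, which is immediate. (An alternative proof uses the Weyl interlacing inequality $\theta_i + \lambda_{\min}(G) \le \hat{\theta}_i \le \theta_i + \lambda_{\max}(G)$ directly, but the min-max argument is the most economical self-contained version.) For our purposes the result will be applied with $\Theta$ the computed diagonal of eigenvalues and $\Theta + G$ the perturbed matrix from \eqref{eq:error_relation_mixed}, with $\|G\|_2$ bounded by \eqref{eq:G_bound}, so the ordering convention ``nondecreasing'' is exactly what pairs the computed $\theta_i$ with the correct exact eigenvalue $\hat{\theta}_i$.
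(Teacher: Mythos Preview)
Your proof via the Courant--Fischer min-max characterization is correct and is the standard textbook argument for Weyl's inequality. However, the paper does not prove this theorem at all: it simply states the result, attributes it to Weyl, and cites \cite{stsu:90} for the proof. So there is nothing to compare on the level of argument; your self-contained proof is more than what the paper provides, and it is entirely sound.
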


The following lemma relates the relative residual to norm-wise
relative backward errors on $A$ and $B$.  It is similar to backward
error results \cite{frto:98, hihi:98} except that it is stated in
terms of $\alpha_i$ and $\beta_i$ instead of $\lambda_i$.
\begin{lemma}
\label{lm:general_residual_bounds}
Suppose that for some real $\beta_i$ and $\alpha_i$, and $\vec{v}_i$ we have
\begin{equation*}
  (\beta_i A - \alpha_i B) \vec{v}_i = \vec{r}_i,
\end{equation*}
where $\alpha_i$ and $\beta_i$ are not both zero and
$\vec{v}_i\neq \vec{0}$.  If $\vec{r}_i = \vec{0}$ then
$(\alpha_i, \beta_i)$ is an exact eigenvalue for eigenvector
$\vec{v}_i$.  If $\vec{r}_i\neq \vec{0}$ and
\begin{equation}
  \label{eq:relative_residual}
  \|\vec{r}_i\|_2 \leq (|\beta_i| \|A\|_2 + |\alpha_i| \|B\|_2) \|\vec{v}_i\|_2 \epsilon
\end{equation}
for $\epsilon > 0$, then there exist $E$ and $F$ such that
\begin{equation*}
  (\beta_i (A+E) - \alpha_i (B+F)) \vec{v}_i = \vec{0},
\end{equation*}
with
\begin{equation*}
  \max\left( \frac{\|E\|_2}{\|A\|_2}, \frac{\|F\|_2}{\|B\|_2} \right) 
  \leq \epsilon.
\end{equation*}
\end{lemma}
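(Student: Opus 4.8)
The plan is to construct $E$ and $F$ explicitly as rank-one perturbations that absorb the residual $\vec{r}_i$, splitting $\vec{r}_i$ between the $A$-term and the $B$-term in proportion to the relative sizes of the two summands on the right-hand side of \eqref{eq:relative_residual}. Concretely, write $\vec{r}_i = \vec{r}_i^A + \vec{r}_i^B$ where $\vec{r}_i^A$ and $\vec{r}_i^B$ are parallel to $\vec{r}_i$ with
\begin{equation*}
  \|\vec{r}_i^A\|_2 = \frac{|\beta_i|\|A\|_2}{|\beta_i|\|A\|_2 + |\alpha_i|\|B\|_2}\|\vec{r}_i\|_2,
  \qquad
  \|\vec{r}_i^B\|_2 = \frac{|\alpha_i|\|B\|_2}{|\beta_i|\|A\|_2 + |\alpha_i|\|B\|_2}\|\vec{r}_i\|_2,
\end{equation*}
assuming for the moment that the denominator is nonzero (which is forced once $\vec{r}_i \neq \vec{0}$, since otherwise \eqref{eq:relative_residual} would give $\vec{r}_i = \vec{0}$). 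The goal is to pick $E$ and $F$ so that $\beta_i E \vec{v}_i = -\vec{r}_i^A$ and $\alpha_i F \vec{v}_i = \vec{r}_i^B$, and then $(\beta_i(A+E) - \alpha_i(B+F))\vec{v}_i = \vec{r}_i - \vec{r}_i^A - \vec{r}_i^B = \vec{0}$.

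The natural choice is the symmetric rank-one (or rank-two) correction that achieves a prescribed action on a single vector with minimal norm. If $\beta_i \neq 0$, set
\begin{equation*}
  E = -\frac{1}{\beta_i}\frac{\vec{r}_i^A \vec{v}_i^\T + \vec{v}_i (\vec{r}_i^A)^\T}{\|\vec{v}_i\|_2^2}
      + \frac{1}{\beta_i}\frac{(\vec{r}_i^A)^\T \vec{v}_i}{\|\vec{v}_i\|_2^4}\vec{v}_i\vec{v}_i^\T,
\end{equation*}
which is symmetric, satisfies $\beta_i E \vec{v}_i = -\vec{r}_i^A$, and has $\|E\|_2 \le 2\|\vec{r}_i^A\|_2/(|\beta_i|\|\vec{v}_i\|_2)$ — here I would use the standard fact that such an interpolating symmetric matrix can in fact be taken with spectral norm exactly $\|\vec{r}_i^A\|_2/(|\beta_i|\|\vec{v}_i\|_2)$ by choosing $E = -\tfrac{1}{\beta_i}(\vec{w}\vec{v}_i^\T + \vec{v}_i\vec{w}^\T)/\|\vec{v}_i\|_2^2$ appropriately or, cleaner still, the Wilkinson-style construction $E = -\tfrac{1}{\beta_i}\vec{r}_i^A(\vec{r}_i^A)^\T /((\vec{r}_i^A)^\T\vec{v}_i)$ when $(\vec{r}_i^A)^\T\vec{v}_i \neq 0$, falling back to a two-term formula otherwise. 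The cleanest route that always works is to note $\|E\|_2 \le \|\vec{r}_i^A\|_2 / (|\beta_i|\,\|\vec{v}_i\|_2)$ is attainable with a symmetric $E$; substituting the value of $\|\vec{r}_i^A\|_2$ above gives
\begin{equation*}
  \frac{\|E\|_2}{\|A\|_2}
  \le \frac{\|\vec{r}_i^A\|_2}{|\beta_i|\,\|A\|_2\,\|\vec{v}_i\|_2}
  = \frac{\|\vec{r}_i\|_2}{(|\beta_i|\|A\|_2 + |\alpha_i|\|B\|_2)\|\vec{v}_i\|_2}
  \le \epsilon,
\end{equation*}
using \eqref{eq:relative_residual} in the last step. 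The symmetric construction for $F$ when $\alpha_i \neq 0$ is identical, giving $\|F\|_2/\|B\|_2 \le \epsilon$. If $\beta_i = 0$ (so necessarily $\alpha_i \neq 0$), take $E = 0$ and put all of $\vec{r}_i$ into $F$; symmetrically if $\alpha_i = 0$. Taking the maximum of the two relative bounds yields the claim.

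The main obstacle is purely the bookkeeping around degenerate cases: ensuring the split of $\vec{r}_i$ is well defined (the denominator $|\beta_i|\|A\|_2 + |\alpha_i|\|B\|_2$ is nonzero because $\alpha_i,\beta_i$ are not both zero and $A,B\neq 0$, and if $\vec{r}_i\neq\vec{0}$ then \eqref{eq:relative_residual} additionally rules out the denominator vanishing), handling the subcase where one of $\alpha_i,\beta_i$ is zero by dumping the whole residual into the other term, and confirming that the interpolating matrix can be chosen symmetric with the stated spectral-norm bound — a standard but slightly delicate fact (one cannot use the bare rank-one $-\vec{r}_i^A\vec{v}_i^\T/(\beta_i\|\vec{v}_i\|_2^2)$ since that is not symmetric, so one needs the symmetric completion and must check its norm). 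Everything else is a direct substitution into \eqref{eq:relative_residual}.
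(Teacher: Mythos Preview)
Your proportional splitting of $\vec{r}_i$ between the $A$-term and the $B$-term is exactly what the paper does, and the resulting norm inequalities are identical.  The one substantive difference is that you insist on making $E$ and $F$ symmetric, which the lemma does not require.  The paper simply takes the non-symmetric rank-one choices
\[
  E = \frac{-\sign(\beta_i)\,\|A\|_2}{(|\beta_i|\|A\|_2 + |\alpha_i|\|B\|_2)\,\|\vec{v}_i\|_2^2}\,\vec{r}_i\vec{v}_i^\T,
  \qquad
  F = \frac{\sign(\alpha_i)\,\|B\|_2}{(|\beta_i|\|A\|_2 + |\alpha_i|\|B\|_2)\,\|\vec{v}_i\|_2^2}\,\vec{r}_i\vec{v}_i^\T,
\]
so that $\beta_i E\vec{v}_i - \alpha_i F\vec{v}_i = -\vec{r}_i$ and $\|E\|_2/\|A\|_2 = \|F\|_2/\|B\|_2 = \|\vec{r}_i\|_2/\big((|\beta_i|\|A\|_2+|\alpha_i|\|B\|_2)\|\vec{v}_i\|_2\big)\le\epsilon$ drop out in one line, with no need for symmetric completions, Wilkinson-style constructions, or the Bunch--Demmel--Van~Loan result on symmetric backward error.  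Your route does buy symmetric perturbations (potentially nicer for downstream symmetric perturbation theory), but at the cost of invoking a nontrivial auxiliary fact that you yourself flag as ``slightly delicate''; since the lemma as stated and as used later in the paper never needs $E,F$ symmetric, that cost is unnecessary here.  The degenerate-case bookkeeping you worry about (one of $\alpha_i,\beta_i$ zero, denominator vanishing) is handled automatically by the paper's formulas via the $\sign$ factors, with the convention $\sign(0)=0$.
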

\begin{proof}
  The matrices
  \begin{equation*}
    E = \frac{-\sign(\beta_i)\|A\|_2}
    {(|\beta_i| \|A\|_2 + |\alpha_i| \|B\|_2)\|\vec{v}_i\|_2^2} \vec{r}_i \vec{v}_i^\T
  \end{equation*}
  and  
\begin{equation*}
    F = \frac{\sign(\alpha_i)\|B\|_2}
    {(|\beta_i| \|A\|_2 + |\alpha_i| \|B\|_2)\|\vec{v}_i\|_2^2} \vec{r}_i \vec{v}_i^\T
  \end{equation*}
  satisfy the claims of the lemma.
\end{proof}

The main result of this section is the following theorem.
\begin{theorem}
  \label{th:eigenvalue_residuals}
  Let $C_a$, $C_b$ , $X$, $\eta$, and $\theta_i$ be computed
  quantities from Algorithm~\ref{alg:spectral_trans}.  Let $E$, $F$,
  $F_1$,$e_n$, and $f_n$ be as in
  Theorem~\ref{th:backward_forward_errors}.  Suppose that
  $(A + E) - \sigma(B + F)$ is invertible.  Without loss of
  generality, assume that the computed eigenvalues $\theta_i$ are in
  nondecreasing order and let $\hat{\theta}_i$ and $\hat{\vec{u}}_i$
  for $i=1,2, \ldots r$ be eigenvalues and eigenvectors of
  \begin{equation*}
    \hat{W} = (C_b+F_1)^\T (A + E - \sigma (B+F))^{-1} (C_b+F_1)
  \end{equation*}
  with the $\hat{\theta}_i$ also in nondecreasing order. 
  Assume that $(C_b + F_1 )\hat{\vec{u}}_i \neq \vec{0}$ and define
  \begin{equation*}
    \hat{\vec{v}}_i =  (A+E-\sigma(B+F))^{-1} (C_b+F_1) \hat{\vec{u}}_i\neq \vec{0}.
  \end{equation*}
  Note that formula exactly defines $\hat{\vec{v}}_i$, which should
  not be interpreted as a vector computed from this formula with
  further numerical error.  Then
  \begin{multline*}
    \left\| \big(\theta_i (A + E) - (1+\sigma \theta_i) (B + F)\big)
      \hat{\vec{v}}_i \right\|_2 \leq \\
    u (e_n + f_n) (1+|\sigma_0|) \eta^2 \|X\|_2^2
    \left(|\theta_i| \|A+E\|_2 
      + |1+\sigma\theta_i| \|B+F\|_2\right) \|\hat{\vec{v}}_i\|_2 + O(u^2).
  \end{multline*}
\end{theorem}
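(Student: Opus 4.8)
The plan is to start from the exact eigenrelation for $\hat{W}$ and propagate it through the spectral transformation, using the structure established in Lemma~\ref{lm:spectral_transformation}. Since $\hat{\vec{u}}_i$ is an exact eigenvector of $\hat{W}=(C_b+F_1)^\T(A+E-\sigma(B+F))^{-1}(C_b+F_1)$ with eigenvalue $\hat{\theta}_i$, and $\hat{\vec{v}}_i=(A+E-\sigma(B+F))^{-1}(C_b+F_1)\hat{\vec{u}}_i$, the converse part of Lemma~\ref{lm:spectral_transformation} applied to the pair $(A+E,\,B+F)$ (with $C_b$ replaced by $C_b+F_1$, which is legitimate because of \eqref{eq:error_relation_B} and \eqref{eq:error_relation_mixed}) gives that $\hat{\vec{v}}_i$ is an \emph{exact} eigenvector of $(A+E,B+F)$ for the eigenvalue $(1+\sigma\hat{\theta}_i,\hat{\theta}_i)$; that is, $\hat{\theta}_i(A+E)\hat{\vec{v}}_i = (1+\sigma\hat{\theta}_i)(B+F)\hat{\vec{v}}_i$. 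Therefore the residual in the theorem is purely the effect of replacing the exact $\hat{\theta}_i$ by the computed $\theta_i$:
\begin{equation*}
  \big(\theta_i(A+E)-(1+\sigma\theta_i)(B+F)\big)\hat{\vec{v}}_i
  = (\theta_i-\hat{\theta}_i)\big((A+E)-\sigma(B+F)\big)\hat{\vec{v}}_i.
\end{equation*}

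Next I would simplify the right-hand side using the definition of $\hat{\vec{v}}_i$: since $((A+E)-\sigma(B+F))\hat{\vec{v}}_i = (C_b+F_1)\hat{\vec{u}}_i$, the residual equals $(\theta_i-\hat{\theta}_i)(C_b+F_1)\hat{\vec{u}}_i$. Taking norms, $\|\text{residual}\|_2 = |\theta_i-\hat{\theta}_i|\,\|(C_b+F_1)\hat{\vec{u}}_i\|_2$. By Theorem~\ref{th:perturb} (Weyl) applied to $W$ and $\hat W = \tilde U(\Theta+G)\tilde U^\T$ — noting the eigenvalues of $\hat W$ are exactly the $\hat\theta_i$ and those of $W$ (equivalently $\Theta$) are the $\theta_i$, both ordered nondecreasingly — we get $|\theta_i-\hat\theta_i|\le\|G\|_2 \le u(e_n+f_n)\|X\|_2^2 + O(u^2)$ by \eqref{eq:G_bound}. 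So the residual is bounded by $u(e_n+f_n)\|X\|_2^2\,\|(C_b+F_1)\hat{\vec{u}}_i\|_2 + O(u^2)$. It remains to express $\|(C_b+F_1)\hat{\vec{u}}_i\|_2$ in terms of the quantities appearing in the stated bound. From the relation $(C_b+F_1)\hat{\vec{u}}_i = ((A+E)-\sigma(B+F))\hat{\vec{v}}_i$, I would bound $\|((A+E)-\sigma(B+F))\hat{\vec{v}}_i\|_2 \le \|(A+E)-\sigma(B+F)\|_2\|\hat{\vec{v}}_i\|_2$, and then use a bound of the type $\|(A+E)-\sigma(B+F)\|_2 \le (1+|\sigma_0|)(\|A+E\|_2)$ plus lower-order cross terms — essentially the first-order version of \eqref{eq:shifted_A_inequality} applied to the perturbed matrices — together with $\eta^2 = \|A-\sigma B\|_2/\|B\|_2$ to reintroduce the factor $\eta^2$. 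Alternatively, and perhaps more cleanly, split $\|(C_b+F_1)\hat{\vec{u}}_i\|_2$ as a contribution to the $\|A+E\|_2$ term and a contribution to the $\|B+F\|_2$ term so that the final bound has exactly the form $(|\theta_i|\|A+E\|_2 + |1+\sigma\theta_i|\|B+F\|_2)\|\hat{\vec{v}}_i\|_2$ multiplied by $u(e_n+f_n)(1+|\sigma_0|)\eta^2\|X\|_2^2$; here one uses that $\eta^2\|B\|_2=\|A-\sigma B\|_2$ and that $\|A-\sigma B\|_2 \le (1+|\sigma_0|)\|A\|_2$, with $\|A\|_2$ absorbed into $\|A+E\|_2$ to first order.

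The main obstacle I anticipate is the bookkeeping that converts the single scalar $\|(C_b+F_1)\hat{\vec{u}}_i\|_2 = \|((A+E)-\sigma(B+F))\hat{\vec{v}}_i\|_2$ into the precise two-term combination $|\theta_i|\|A+E\|_2 + |1+\sigma\theta_i|\|B+F\|_2$ while keeping the coefficient exactly $(1+|\sigma_0|)\eta^2\|X\|_2^2$ and not something slightly larger. The natural move is to write $((A+E)-\sigma(B+F))\hat{\vec{v}}_i = \frac{1}{\theta_i}\big(\theta_i(A+E)-(1+\sigma\theta_i)(B+F) + (B+F)\big)\hat{\vec{v}}_i$ when $\theta_i\ne 0$ and recognize that the first group is itself the residual being bounded (contributing only an $O(u^2)$ correction after multiplication by $|\theta_i-\hat\theta_i| = O(u)$), so that to first order $\|((A+E)-\sigma(B+F))\hat{\vec{v}}_i\|_2$ can be replaced by $\tfrac{1}{|\theta_i|}\|(B+F)\hat{\vec{v}}_i\|_2$; but this path is awkward near $\theta_i=0$. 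A cleaner route is to use the exact identity $\theta_i(A+E)\hat{\vec{v}}_i - (1+\sigma\theta_i)(B+F)\hat{\vec{v}}_i = (\theta_i-\hat\theta_i)(A+E)\hat{\vec{v}}_i$ directly (subtracting the exact eigenrelation $\hat\theta_i(A+E)\hat{\vec{v}}_i=(1+\sigma\hat\theta_i)(B+F)\hat{\vec{v}}_i$ from the term $\theta_i(A+E)\hat{\vec{v}}_i-(1+\sigma\theta_i)(B+F)\hat{\vec{v}}_i$ leaves $(\theta_i-\hat\theta_i)(A+E)\hat{\vec{v}}_i - \sigma(\theta_i-\hat\theta_i)(B+F)\hat{\vec{v}}_i$, i.e. $(\theta_i-\hat\theta_i)((A+E)-\sigma(B+F))\hat{\vec{v}}_i$, consistent with the above), and then bound using whichever regrouping makes both $\|A+E\|_2$ and $\|B+F\|_2$ appear. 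I would handle the general (possibly $\theta_i=0$) case by noting that $|\theta_i|\|A+E\|_2 + |1+\sigma\theta_i|\|B+F\|_2 \ge \max(|\theta_i|\|A+E\|_2,\ |1+\sigma\theta_i|\|B+F\|_2)$ dominates $\min$-type quantities, and that $|\theta_i-\hat\theta_i|\cdot\big(|\theta_i|+|\sigma\theta_i|\big)$-type products can be absorbed, but the delicate point — keeping the constant at exactly $(1+|\sigma_0|)$ rather than, say, $2(1+|\sigma_0|)$ — will require care with the triangle inequality $\|A-\sigma B\|_2\le\|A\|_2+|\sigma|\|B\|_2 = (1+|\sigma_0|)\|A\|_2$ and with recognizing that $E$ and $F$ perturbations only affect the bound at order $u^2$ once they multiply the already-$O(u)$ factor $|\theta_i-\hat\theta_i|$. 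The rest — Weyl's inequality, the substitution $\eta^2\|B\|_2 = \|A-\sigma B\|_2$, and discarding $O(u^2)$ terms — is routine.
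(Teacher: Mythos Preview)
Your approach is correct and is exactly the paper's: use Lemma~\ref{lm:spectral_transformation} on the perturbed pair to see that $\hat{\vec v}_i$ is an exact eigenvector for $(1+\sigma\hat\theta_i,\hat\theta_i)$, subtract to get the residual $(\theta_i-\hat\theta_i)\bigl((A+E)-\sigma(B+F)\bigr)\hat{\vec v}_i$, bound $|\theta_i-\hat\theta_i|$ by Weyl and \eqref{eq:G_bound}, and then use $\|A-\sigma B\|_2=\eta^2\|B\|_2$.

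The only place you hesitate is the final algebraic conversion, and the paper resolves your ``main obstacle'' with a clean two-case split rather than any of the routes you sketch. One simply shows
\[
\|B\|_2 \;\le\; (1+|\sigma_0|)\bigl(|\theta_i|\,\|A\|_2 + |1+\sigma\theta_i|\,\|B\|_2\bigr)
\]
for every real $\theta_i$. If $|\theta_i|\le \|B\|_2/\bigl((1+|\sigma_0|)\|A\|_2\bigr)$ then $|\sigma\theta_i|\le |\sigma_0|/(1+|\sigma_0|)<1$, whence $|1+\sigma\theta_i|\ge 1/(1+|\sigma_0|)$ and the $\|B\|_2$ term alone suffices; otherwise $(1+|\sigma_0|)|\theta_i|\|A\|_2>\|B\|_2$ and the $\|A\|_2$ term suffices. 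Replacing $\|A\|_2,\|B\|_2$ by $\|A+E\|_2,\|B+F\|_2$ is harmless since the whole expression already carries an $O(u)$ prefactor. This avoids the $1/\theta_i$ manipulation entirely and gives the constant $(1+|\sigma_0|)$ on the nose.
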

\begin{proof}
  From Theorem~\ref{th:backward_forward_errors} we know that for
  $i=1,2,\ldots, r$ the $\theta_i$ are the eigenvalues of $\Theta$ and
  the $\hat{\theta}_i$ are eigenvalues of $\Theta + G$.
  Theorem~\ref{th:perturb} then implies that
  \begin{equation*}
    \hat{\theta}_i = \theta_i + \delta_i,
  \end{equation*}
  where
  \begin{equation*}
    |\delta_i| \leq u (e_n+f_n) \|X\|_2^2 + O(u^2).
  \end{equation*}
  Since, in addition to having assumed that the $\hat{\theta}_i$ and
  $\hat{\vec{u}}_i$ are eigenvalues and eigenvectors of $\hat{W}$, we also
  have \eqref{eq:error_relation_B},
  Lemma~\ref{lm:spectral_transformation} is applicable and implies
  that the exact generalized eigenvector of the pair $(A+E, B+F)$ is
  $\hat{\vec{v}}_i$ so that
  \begin{equation*}
    (\theta_i+\delta_i) (A+E) \hat{\vec{v}}_i 
    = (1+\sigma (\theta_i +\delta_i)) (B+F)\hat{\vec{v}}_i,
  \end{equation*}
  or
  \begin{equation*}
    \theta_i (A+E) \hat{\vec{v}}_i + H_i \hat{\vec{v}}_i 
    = (1+\sigma \theta_i) (B+F)\hat{\vec{v}}_i,
  \end{equation*}
  where
  \begin{equation*}
    H_i = \delta_i ( A+E - \sigma (B+F))
  \end{equation*}
  with $H_i^\T = H_i$.  The residual is
  $\hat{\vec{r}}_i = -H_i\hat{\vec{v}}_i$.  We have
  \begin{equation*}
    \|H_i\|_2 \leq u (e_n+f_n) \|X\|_2^2 \|A - \sigma B\|_2 +O(u^2)
    = u (e_n+f_n) \eta^2 \|X\|_2^2 \|B\|_2 +O(u^2).
  \end{equation*}

  We consider two cases.  Either
  \begin{equation*}
    |\theta_i| \leq \frac{\|B\|_2}{(1+|\sigma_0|)\|A\|_2}\qquad\mbox{or}\qquad
    |\theta_i| > \frac{\|B\|_2}{(1+|\sigma_0|)\|A\|_2}.
  \end{equation*}
  In the first case, we have
  \begin{equation*}
    |1+\sigma \theta_i| \geq 1 - |\sigma \theta_i| \geq 1 - 
    \frac{|\sigma| \|B\|_2}{(1+|\sigma_0|)\|A\|_2} = 
    1 - \frac{|\sigma_0|}{(1+|\sigma_0|)} = \frac{1}{1+|\sigma_0|} > 0.
  \end{equation*}
  Consequently
  \begin{align*}
    \|H_i\|_2 & \leq u (e_n + f_n) \eta^2 \|X\|_2^2 
                \frac{|1+\sigma \theta_i|}{1 - |\sigma \theta_i|} \|B\|_2+ O(u^2) \\
              & \leq u  (e_n + f_n) \eta^2 \|X\|_2^2 |1+\sigma \theta_i|
                (1+|\sigma_0|) \|B\|_2 + O(u^2).
  \end{align*}
  Thus
  \begin{equation*}
    \|\hat{\vec{r}}_i\|_2 \leq \|H_i\|_2 \|\hat{\vec{v}}_i\|_2 \leq
    u (e_n + f_n) \eta^2\|X\|_2^2 (1+|\sigma_0|) |1+\sigma \theta_i| \|B+F\|_2 \|\hat{\vec{v}}_i\|_2 +O(u^2),
  \end{equation*}
  which establishes the theorem for the first case.

  In the second case, we have
  \begin{align*}
    \|H_i\|_2 & \leq u (e_n + f_n) \eta^2 \|X\|_2^2 \|B\|_2 + O(u^2) \\
              & \leq u (e_n + f_n) \eta^2 \|X\|_2^2 |\theta_i|
                \frac{(1+|\sigma_0|)\|A\|_2}{\|B\|_2}  \|B\|_2 + O(u^2) \\
              & = u (e_n + f_n) \eta^2 \|X\|_2^2 |\theta_i| (1+|\sigma_0|)\|A\|_2 + O(u^2).
  \end{align*}
  Thus
  \begin{equation*}
    \|\hat{\vec{r}}_i\|_2 \leq \|H_i\|_2 \|\hat{\vec{v}}_i\|_2 
    \leq u (e_n + f_n) \eta^2\|X\|_2^2 (1+|\sigma_0|) |\theta_i| 
    \|A+E\|_2 \|\hat{\vec{v}}_i\|_2 +O(u^2),
  \end{equation*}
  which establishes the theorem for the second case.
\end{proof}

If the standard assumptions for Algorithm~\ref{alg:spectral_trans}
from \S\ref{sec:algorithm} hold and $|\sigma_0|$ is not large, then
this theorem, Lemma~\ref{lm:general_residual_bounds}, and
Theorem~\ref{th:backward_forward_errors} imply that for the computed
$\theta_i$, each eigenvalue $(1 + \sigma\theta_i, \theta_i)$ is a
generalized eigenvalue of a pair of matrices close to $(A, B)$. This
holds regardless of the magnitude of
$\lambda_i = (1 + \sigma\theta_i)/\theta_i$. We have proven this
result by showing that $\hat{\vec{v}}_i$ achieves a small
residual. However the theorem does not say anything about the residual
achieved by the computed eigenvector $\vec{v}_i$, which we consider in
\S\ref{sec:large_sigma0}.

The assumptions that $(A + E) - \sigma(B + F)$ is invertible and that
$(C_b + F_1)\hat{\vec{u}}_i \neq \vec{0}$ are less restrictive than
they might at first seem.  The invertibility assumption is guaranteed
by \eqref{eq:breveE_factorization_errors} if the numerical
factorization of $A - \sigma B$ succeeds and results in invertible
$C_a$. This will happen if there are no zero pivots in the $LDL^\T$
factorization of $A - \sigma B$.  Furthermore, the first order analysis
has shown that any possible harm from potentially ill conditioned
$C_a$ must be a second order term.  

The assumption $(C_b + F_1)\hat{\vec{u}}_i \neq \vec{0}$ is necessary
to guarantee that $\hat{\vec{v}}_i \neq \vec{0}$. If
$(C_b + F_1)\hat{\vec{u}}_i = \vec{0}$ holds exactly, then
$\hat{W}\hat{\vec{u}}_i=\vec{0}$ so that $\hat{\theta}_i = 0$ and
$|\theta_i| \leq u(e_n + f_n )\|X\|_2^2+O(u^2) = O(u)$. If, following
Lemma~\ref{lm:spectral_transformation}, we let
$\hat{\vec{v}}_i = \hat{\vec{u}}_i$, then
\begin{equation*}
(B + F)\hat{\vec{v}}_i = (C_b + F_1) (C_b + F_1)^\T \hat{\vec{v}}_i = \vec{0}
\end{equation*}
and
\begin{multline*}
  \|(\theta_i (A+E) - (1+\sigma \theta_i)(B+F))\hat{\vec{v}}_i\|_2 =
  \|\theta_i (A+E) \hat{\vec{v}}_i\|_2 \\
  \leq u(e_n+f_n) \|X\|_2^2 \|A\|_2 \|\hat{\vec{v}}_i\|_2 + O(u^2) 
  = u (e_n+f_n) \gamma \eta^2 \|X\|_2^2 \|B\|_2^2 \|\hat{\vec{v}}_i\|_2,
\end{multline*}
where we have used the fact that $\eta^2\gamma = \|A\|_2/\|B\|_2$.
This shows that with a different definition of $\hat{\vec{v}}_i$, a
satisfactory residual bound still
holds. Lemma~\ref{lm:general_residual_bounds} then guarantees a small
backward error for the eigenvalue $(1 + \sigma\theta_i, \theta_i)$.

\section{Computed Eigenvectors and the Effect of Large $\sigma_0$}
\label{sec:large_sigma0}
\pdfbookmark[0]{Stability with Large Shift}{bk:large_shift}

In this section, we consider the residuals for the computed
eigenvectors. The errors associated with the computation of
$\vec{v}_i$ involve a larger number of terms. To make the analysis
more manageable, we use $O(u)$ to denote an error that can be bounded
by an expression of the form $h_n u$, where $h_n > 0$ depends solely
on $n$ and not on any other quantities determined by the elements of
$A$ and $B$ or by the shift.  We make one exception to this approach,
which is that we do not hide $c_n$ from \eqref{eq:Bfactor_error}
behind the $O(u)$ notation.  The reason for this is that factoring $B$
into a possibly reduced rank factorization $C_bC_b^\T$ might involve
truncation error using a tolerance that would impact the magnitude of
$c_n$.  It seems appropriate to keep track of this separately from the
other errors, which are solely due to rounding.

\begin{lemma}
  \label{lm:residual_bound}
  Let $C_a$, $C_b$, $X$, $\theta_i\neq 0$, and $U$ be computed
  quantities from Algorithm~\ref{alg:spectral_trans}. Let $\eta$ and
  $\gamma$ be as in \eqref{eq:eta_def} and \eqref{eq:gamma_def}. Let
  the errors associated with the algorithm be as in
  Theorem~\ref{th:backward_forward_errors}. If $\vec{v}_i$ is computed
  from $C_a^\T \vec{v}_i = D_a X\vec{u}_i$ using a backward stable
  algorithm for the solution of the system and standard matrix-vector
  multiplication for forming the right-hand side, then
  \begin{multline*}
    \left\| \Big(\theta_i A - (1+\sigma \theta_i) B\Big)
      \vec{v}_i \right\|_2 \leq u c_n +
    O(u)\bigg[ \eta \|X\|_2 \\
    + \left( 1 + \frac{1}{|\eta^2\theta_i|} +\max(\gamma,1) \right)
    \eta^2\|X\|_2^2
    \bigg]
    \|B\|_2 \|\vec{v}_i\|_2.
  \end{multline*}
\end{lemma}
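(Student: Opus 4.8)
The plan is to track the rounding errors through the final two steps of Algorithm~\ref{alg:spectral_trans}, namely the formation of the right-hand side $D_a X \vec{u}_i$ and the solve $C_a^\T \vec{v}_i = D_a X \vec{u}_i$, and then to relate the resulting computed $\vec{v}_i$ to the exact vector $\hat{\vec{v}}_i$ from Theorem~\ref{th:eigenvalue_residuals} (or rather to a vector for which that theorem's residual bound applies). Since $D_a X$ is computed exactly (the entries of $D_a$ are $\pm 1$), the only error in forming the right-hand side is in the matrix-vector product $X\vec{u}_i$, and backward stability of the triangular-type solve with $C_a$ gives $(C_a + \Delta C_a)^\T \vec{v}_i = D_a X \vec{u}_i + \vec{\delta}$ for small $\Delta C_a$ and $\vec\delta$ with $\|\Delta C_a\|_2 = O(u)\|C_a\|_2$ and $\|\vec\delta\|_2 = O(u)\|D_a X\vec u_i\|_2$. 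Using $\|C_a\|_2 \le b_n\|A-\sigma B\|_2^{1/2}$ from \eqref{eq:A_factor_error} and $\|X\|_2$, these perturbations translate into bounds scaled by $\eta$ and $\eta\|X\|_2$ after I multiply through by $\|A-\sigma B\|_2^{-1/2}$ and recall $\eta = \|A-\sigma B\|_2^{1/2}/\|B\|_2^{1/2}$.

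Next I would multiply the perturbed solve relation on the left by $C_a D_a$ and use $C_a D_a C_a^\T = A + E - \sigma(B+F)$ from \eqref{eq:breveE_factorization_errors} to obtain
\begin{equation*}
  \big(A+E - \sigma(B+F)\big)\vec{v}_i = C_a D_a (D_a X \vec{u}_i) + (\text{error terms}) = C_a X \vec{u}_i + (\text{error terms}),
\end{equation*}
and then use \eqref{eq:Xerror}, i.e.\ $C_a X = C_b + \Delta$ with $\|\Delta\|_2 \le u d_n \|C_a\|_2\|X\|_2$, to replace $C_a X\vec u_i$ by $(C_b + F_1)\vec u_i$ plus controlled error, bringing in the factor $\eta\|X\|_2\cdot\|C_b\|_2$ consistent with \eqref{eq:F1_bound}. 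At this point $\vec v_i$ is, up to $O(u)$-scaled perturbations of the coefficient matrix and right-hand side, the exact vector $(A+E-\sigma(B+F))^{-1}(C_b+F_1)\vec u_i$; here I invoke the first-order principle used throughout the paper that ill-conditioning of $A-\sigma B$ (equivalently of $C_a$) contributes only at second order, because the perturbations already carry a factor of $u$. This identifies $\vec v_i$ with $\hat{\vec v}_i$ of Theorem~\ref{th:eigenvalue_residuals} to first order, so I may apply that theorem's residual bound for $\hat{\vec v}_i$ against $(A+E, B+F)$, and then convert it to a residual against $(A,B)$ using $\|E\|_2, \|F\|_2 = O(u)\|A\|_2, O(u)\|B\|_2$ from \eqref{eq:E_bound} and \eqref{eq:F_bound}.

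The accounting of the $\|A\|_2$ versus $\|B\|_2$ scaling is where the $\gamma$ and $1/|\eta^2\theta_i|$ factors enter, and this is the main obstacle. Replacing $\|A\|_2$ by $\eta^2\gamma\|B\|_2$ (using $\eta^2\gamma = \|A\|_2/\|B\|_2$) handles the $\max(\gamma,1)$ term. The $1/|\eta^2\theta_i|$ term arises because one must express the residual relative to $\|B\|_2\|\vec v_i\|_2$ while some error contributions scale like $\|X\|_2^2\|A-\sigma B\|_2$ or like the right-hand side norm $\|X\vec u_i\|_2$; normalizing by the factor $\theta_i$ that relates $\vec u_i$ to $\vec v_i = $ (essentially) $(A-\sigma B)^{-1}C_b\vec u_i$ and using $C_b^\T\vec v_i = \theta_i\vec u_i$ from Lemma~\ref{lm:spectral_transformation}, together with $\eta^2 = \|A-\sigma B\|_2/\|B\|_2$, produces the $1/|\eta^2\theta_i|$ weight. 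I would be careful to split into the same kind of cases as in Theorem~\ref{th:eigenvalue_residuals} (small versus large $|\theta_i|$, controlling $|1+\sigma\theta_i|$) if needed, and to keep $c_n$ visible as a standalone additive $u c_n$ term as the statement demands, hiding all purely rounding-induced polynomial-in-$n$ constants inside $O(u)$. Finally I collect the terms into the three groups $\eta\|X\|_2$, $(1 + 1/|\eta^2\theta_i| + \max(\gamma,1))\eta^2\|X\|_2^2$, each multiplied by $\|B\|_2\|\vec v_i\|_2$, absorbing lower-order combinations (e.g.\ $\eta\|X\|_2\|C_b\|_2/\|B\|_2^{1/2} = O(\eta^2\|X\|_2)$ since $\|C_b\|_2^2 \approx \|B\|_2$) into the quadratic-in-$\eta\|X\|_2$ term.
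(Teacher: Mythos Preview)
There is a genuine gap in the step where you ``identify $\vec v_i$ with $\hat{\vec v}_i$ of Theorem~\ref{th:eigenvalue_residuals} to first order.'' This identification fails for two separate reasons. First, Theorem~\ref{th:eigenvalue_residuals} defines $\hat{\vec v}_i$ from the \emph{exact} eigenvector $\hat{\vec u}_i$ of $\hat W$, and the computed $\vec u_i$ is in general not close to $\hat{\vec u}_i$: eigenvector perturbation bounds depend on the gap between $\hat\theta_i$ and the rest of the spectrum, which is not controlled here. Second, even if you bypass that and only claim $\vec v_i\approx (A+E-\sigma(B+F))^{-1}(C_b+F_1)\vec u_i$, you are inverting a matrix whose condition number is not controlled. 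Concretely, from $(A-\sigma B+\breve E)\vec v_i=(C_b+F_1)\vec u_i+\vec e$ with $\|\vec e\|_2=O(u)\cdot[\ldots]$, subtracting the exact relation and inverting gives a difference bounded by $\|(A-\sigma B)^{-1}\|_2\,\|\vec e\|_2$, which is \emph{not} first order in $u$ when $A-\sigma B$ is ill conditioned. The ``first-order principle'' you invoke is used in the paper only in contexts where the $O(u)$ quantity being amplified already carries the needed inverse; it does not license an unrestricted inversion.

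The paper's proof never asserts that $\vec v_i$ is close to any exact vector. Instead it multiplies the relation $(A-\sigma B+\breve E)\vec v_i=(C_b+F_1+C_aJ_2)\vec u_i-C_aD_aJ_1\vec v_i$ by $\theta_i$, replaces $\theta_i(C_b+F_1)\vec u_i$ via the approximate eigenvector relation \eqref{eq:u_residual} for the computed $\vec u_i$, and then substitutes back for $(A-\sigma B+\breve E)^{-1}(C_b+F_1)\vec u_i$ using the same backward-error identity. The crucial point is that after these substitutions, every surviving occurrence of $C_a^{-\T}$ is left-multiplied by $B$, and $BC_a^{-\T}=C_bX^\T+O(u)$ is bounded by $\|B\|_2^{1/2}\|X\|_2$ regardless of how ill conditioned $C_a$ is. This yields an explicit residual identity (equation \eqref{eq:residual4} in the paper) with seven $O(u)$ terms, each bounded separately; no case split on $|\theta_i|$ is used. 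The $1/|\eta^2\theta_i|$ contribution does not arise from normalization of the right-hand side as you suggest, but from converting the bound $\|\vec u_i\|_2\le 1$ into a bound proportional to $\|\vec v_i\|_2$ via $C_b^\T\vec v_i=\theta_i\vec u_i+O(u)$, which gives $1\le \|B\|_2^{1/2}\|\vec v_i\|_2/|\theta_i|+O(u)$.
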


The proof of Lemma~\ref{lm:residual_bound} is given in
Appendix~\ref{sec:proofs-theor-refth:b}.  The lemma leads to the
following residual bounds.
\begin{theorem}
  \label{th:residual_bound}
  Assume that $\sigma\neq 0$ and $\lambda_i\neq 0$.  Under the assumptions of
  Lemma~\ref{lm:residual_bound}, we have
  \begin{multline*}
    \left\| \big(\theta_i A - (1+\sigma \theta_i) B\big) \vec{v}_i \right\|_2
    \leq \\
    |1+\sigma \theta_i| \cdot |1-\sigma/\lambda_i| \Bigg[
    uc_n +O(u)\Bigg( \eta\|X\|_2 \\
    + \left(1 + \max(\gamma, 1)
      \left(1+ \left| 1 - \frac{\lambda_i}{\sigma}\right|\right)\right)
    \eta^2\|X\|_2^2 \Bigg) \Bigg] \|B\|_2 \|\vec{v}_i\|_2,
  \end{multline*}
  and
  \begin{multline*}
    \left\| \big(\theta_i A - (1+\sigma \theta_i) B\big) \vec{v}_i \right\|_2
    \leq \\
    |\theta_i| \cdot |1-\lambda_i/\sigma| \cdot |\sigma_0| \Bigg[
    uc_n +O(u)\Bigg( \eta\|X\|_2 \\
    + \left(1 + \max(\gamma, 1) \left(1+ \left| 1
          - \frac{\lambda_i}{\sigma}\right|\right)\right)
    \eta^2\|X\|_2^2 \Bigg) \Bigg] \|A\|_2 \|\vec{v}_i\|_2,
  \end{multline*}
\end{theorem}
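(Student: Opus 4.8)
The plan is to derive Theorem~\ref{th:residual_bound} directly from Lemma~\ref{lm:residual_bound} by relating the various quantities appearing in that lemma to the factors $|1+\sigma\theta_i|$, $|1-\sigma/\lambda_i|$, $|\theta_i|$, $|1-\lambda_i/\sigma|$, and $|\sigma_0|$ that appear in the statement of the theorem. The starting point is the spectral transformation identity $\theta_i = 1/(\lambda_i-\sigma)$, which rewrites as $\lambda_i = (1+\sigma\theta_i)/\theta_i$, and the relations that follow from it. Since $\sigma\neq 0$ and $\lambda_i\neq 0$ are assumed, all the ratios $\sigma/\lambda_i$ and $\lambda_i/\sigma$ are well-defined, and the key algebraic facts I would need are
\begin{equation*}
  \theta_i \sigma = \frac{\sigma}{\lambda_i-\sigma} = \frac{1}{\lambda_i/\sigma - 1},
  \qquad
  1+\sigma\theta_i = \frac{\lambda_i}{\lambda_i - \sigma} = \frac{1}{1 - \sigma/\lambda_i},
\end{equation*}
from which $|1+\sigma\theta_i|\cdot|1-\sigma/\lambda_i| = 1$ and $|\theta_i|\cdot|1-\lambda_i/\sigma|\cdot|\sigma_0| = |\sigma_0/\sigma| = \|B\|_2/\|A\|_2$. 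These two identities are exactly what converts the $\|B\|_2\|\vec v_i\|_2$ prefactor in Lemma~\ref{lm:residual_bound} into the prefactors claimed in the two bounds of the theorem (the first keeping $\|B\|_2$, the second trading it for $\|A\|_2$), once the bracketed term is shown to be no larger than the new bracketed term times the reciprocal of the new explicit factor.

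The substantive work is in bounding the bracketed factor from Lemma~\ref{lm:residual_bound},
\begin{equation*}
  u c_n + O(u)\left[\eta\|X\|_2 + \left(1 + \frac{1}{|\eta^2\theta_i|} + \max(\gamma,1)\right)\eta^2\|X\|_2^2\right],
\end{equation*}
by the target bracketed factor
\begin{equation*}
  u c_n + O(u)\left[\eta\|X\|_2 + \left(1 + \max(\gamma,1)\left(1+\left|1-\tfrac{\lambda_i}{\sigma}\right|\right)\right)\eta^2\|X\|_2^2\right].
\end{equation*}
The only discrepancy is the term $1/|\eta^2\theta_i|$ against $\max(\gamma,1)|1-\lambda_i/\sigma|$. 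Here I would use that $\eta^2 = \|A-\sigma B\|_2/\|B\|_2$ and $\gamma = \|A\|_2/\|A-\sigma B\|_2$, so that $\eta^2\gamma = \|A\|_2/\|B\|_2$, together with $1/|\theta_i| = |\lambda_i - \sigma| = |\sigma|\,|1 - \lambda_i/\sigma|$ (valid since $\sigma\neq 0$). This gives
\begin{equation*}
  \frac{1}{|\eta^2\theta_i|} = \frac{|\sigma|\,|1-\lambda_i/\sigma|}{\eta^2}
  = \frac{|\sigma|\,\|B\|_2}{\|A-\sigma B\|_2}\left|1-\frac{\lambda_i}{\sigma}\right|
  = |\sigma_0|\,\gamma\left|1-\frac{\lambda_i}{\sigma}\right|,
\end{equation*}
and then \eqref{eq:gamma_sigma0_bound} gives $|\sigma_0|\gamma \leq 2\max(\gamma,1)$, so $1/|\eta^2\theta_i| \leq 2\max(\gamma,1)|1-\lambda_i/\sigma|$. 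Absorbing the factor of $2$ into the $O(u)$ constant shows the bracketed factor in the lemma is dominated by the bracketed factor in the theorem, and in fact we may keep the extra $\max(\gamma,1)$ term already present there or merge it; writing $1 + \max(\gamma,1)(1+|1-\lambda_i/\sigma|)$ comfortably covers $1 + \max(\gamma,1) + 2\max(\gamma,1)|1-\lambda_i/\sigma|$ up to the $O(u)$ constant.

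With these pieces assembled, the two bounds follow by multiplying: for the first bound, write the Lemma's right-hand side as $\left(|1+\sigma\theta_i||1-\sigma/\lambda_i|\right)$ times itself (since that product is $1$) and distribute the second copy across $\|B\|_2$; bounding the bracketed factor as above yields the first displayed inequality. For the second, use instead $\|B\|_2 = \|B\|_2/\|A\|_2 \cdot \|A\|_2 = |\theta_i||1-\lambda_i/\sigma||\sigma_0|\cdot\|A\|_2$ via the second identity, and again replace the bracketed factor by its bound. The main obstacle, such as it is, is purely bookkeeping: keeping the $O(u)$ convention honest (the paper's $O(u)$ hides only $n$-dependent constants, so the factor of $2$ and the various additions of $1$ are legitimate to absorb, but factors like $\max(\gamma,1)$ or $|1-\lambda_i/\sigma|$ are not and must be carried explicitly), and making sure the case $\theta_i = 0$ is excluded — which it is, since Lemma~\ref{lm:residual_bound} already assumes $\theta_i\neq 0$, consistent with $\lambda_i\neq\infty$, and the additional hypotheses $\sigma\neq 0$, $\lambda_i\neq 0$ guarantee every reciprocal used above is finite.
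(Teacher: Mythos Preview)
Your proposal is correct and follows essentially the same route as the paper's own proof: the same spectral-transformation identities $1+\sigma\theta_i = 1/(1-\sigma/\lambda_i)$ and $1/|\theta_i|\,\|B\|_2 = |1-\lambda_i/\sigma|\,|\sigma_0|\,\|A\|_2$, and the same bound $1/|\eta^2\theta_i| = \gamma|\sigma_0|\,|1-\lambda_i/\sigma| \leq 2\max(\gamma,1)|1-\lambda_i/\sigma|$ via \eqref{eq:gamma_sigma0_bound}, with the harmless constant absorbed into $O(u)$.
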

\begin{proof}
  The first bound follows from Lemma~\ref{lm:residual_bound} using
  the identities
  \begin{equation*}
    1+\sigma\theta_i = 1 + \frac{\sigma}{\lambda_i-\sigma} = \frac{\lambda_i}{\lambda_i - \sigma}
    = \frac{1}{1 - \sigma/\lambda_i}
  \end{equation*}
  and
  \begin{multline*}
    \frac{1}{|\eta^2\theta_i|} 
    = \frac{|\lambda_i - \sigma| \cdot\|B\|_2}{\|A-\sigma B\|_2} 
    = \frac{|1-\lambda_i/\sigma| \cdot |\sigma| \cdot \|B\|_2}{\|A-\sigma B\|_2}\\
    = \frac{|1-\lambda_i/\sigma| \cdot |\sigma_0| \cdot \|A\|_2}{\|A-\sigma B\|_2}
    = |1-\lambda_i/\sigma| \cdot \gamma |\sigma_0|
    \leq 2 |1-\lambda_i/\sigma| \max(\gamma, 1).
  \end{multline*}
  We have not bothered to distinguish between exact and computed
  quantities because the quantities to be bounded are multiplied by
  $O(u)$ in the bound given by the theorem.  The second bound follows
  from the same considerations as the first upon noting that
  \begin{equation*}
    \frac{1}{|\theta_i|} \|B\|_2 = |1 - \lambda_i/\sigma| \cdot \|\sigma B\|_2
    = |1-\lambda_i / \sigma| \cdot |\sigma_0| \cdot \|A\|_2.
  \end{equation*}
\end{proof}

The first bound of the theorem suggests that under the standard
assumptions for Algorithm~\ref{alg:spectral_trans} listed at the end
of \S\ref{sec:algorithm}, the eigenvalue
$(1 + \sigma\theta_i, \theta_i)$ and computed eigenvector $\vec{v}_i$
achieve a small residual when neither $|1 - \sigma/\lambda_i |$ nor
$|1 - \lambda_i /\sigma|$ is large, $\eta \|X\|_2$ is not large, and
cancellation is avoided in forming $A-\sigma B$ so that $\gamma$ is
not large.  Typically the more important factors are
$|1 - \sigma/\lambda_i|$ and $|1 - \lambda_i/\sigma|$. These are
moderate when the eigenvalue $\lambda_i$ is not much smaller or much
larger in magnitude than the shift. For an eigenvalue $\lambda_i$ that
is close to $\sigma$ in a relative sense, then these factors become
small, which can limit the effect of larger $\eta \|X\|_2$ on the
residuals.  If the residuals are small, then
Lemma~\ref{lm:general_residual_bounds} guarantees that the computed
generalized eigenvalue and eigenvector are a generalized eigenvalue
and eigenvector for a pair of matrices close to $(A, B)$.

The second bound includes $|\sigma_0|$ as a factor and drops
$|1 - \sigma/\lambda_i|$. It implies that if the scaled shift is not
too large, and the other conditions hold, then eigenvectors associated
with eigenvalues that are not much larger than the shift achieve a
small residual, even if the eigenvalue is many orders of magnitude
smaller than the shift.  These general trends are observed in the
numerical experiments.

\section{Numerical Experiments}
\label{sec:numerical-experiments}
\pdfbookmark[0]{Numerical Experiments}{bk:num_exp}

The algorithm has been implemented in the Julia programming
language.\footnote{The code is available
  at
  \url{https://github.com/m-a-stewart/DenseSpectralTransformation.jl}.
  The specific commit used to generate the plots in this draft is
  tagged as \texttt{PaperSubmitted}.} All computations use double
precision. The pivoted Cholesky function included with Julia is based
on LAPACK \verb|xPSTRF| and is used to factor $B$ using a tolerance of
zero. The zero tolerance ensures that the algorithm continues the
factorization until a negative pivot is encountered, at which point
the factorization is truncated. If $B$ is truly close to semidefinite,
this gives a reduced rank factorization of a rank deficient matrix
close to B, guaranteeing that $\|B-C_bC_b^\T\|_2 \leq u c_n \|B\|_2$
for $c_n$ that is not large.  The implementation of the factorization
of $A- \sigma B$ uses the Julia function \verb|bunchkaufman| which,
with an appropriate option, actually computes an $LDL^\T$
factorization with rook pivoting as outlined in
\S\ref{sec:algorithm}. The function calls \verb|xSYTRF_ROOK| from
LAPACK\@. The eigenvalue decomposition is computed using the Julia
function \verb|eigen|, which calls LAPACK \verb|xSYEVR|. The solution
of $C_a X = C_b$ was done using a factorization $C_a = P LQD$ where
$Q$ is orthogonal with $1\times 1$ and $2 \times 2$ blocks on the
diagonal, $P$ is a permutation, $L$ is unit lower triangular, and $D$
is diagonal with positive elements on the diagonal. This factorization
was obtained from the $LDL^\T$ factorization of $A - \sigma B$ with
rook pivoting as described in \S\ref{sec:algorithm}.  The matrices $A$
and $B$ are based on matrices available from the NIST Matrix Market
web site at \url{https://math.nist.gov/MatrixMarket/}. The matrix $A$
is \verb|bcsstk13| from the Harwell-Boeing collection. The matrix $B$
is a modified version of \verb|bcsstm13| in which we have added
\begin{equation*}
  e^{-0.02 (n-k+1)} \|B_0\|_2
\end{equation*}
to the $k$th diagonal element of the original matrix $B_0$. This was
done because the original $B_0$ is exactly singular with multiple zero
rows and columns. The standard algorithm based on Cholesky
factorization of $B_0$ fails, which is inconvenient for making a
comparison. This particular modification also provides a range of
generalized eigenvalues without large gaps that might obscure general
trends in the dependence of the residuals on the magnitude of the
eigenvalue. Both $A$ and $B$ are $2003 \times 2003$ and
sparse. Neither is diagonal. This might justify using the spectral
transformation Lanczos method, but the matrices are small enough that
using a dense factorization method is reasonable. Both matrices are
positive definite so that the generalized eigenvalues are positive. We
have run the code with indefinite $A$ without any notable differences
in the results. We have
\begin{equation*}
  \kappa_2(A) = 1.1\times 10^{10}, \qquad \|A\|_2 = 3.1\times 10^{12}
\end{equation*}
\begin{equation*}
  \kappa_2(B) = 2.4\times 10^{17}, \eqand \|B\|_2 = 257.9.
\end{equation*}
The unmodified matrix $B_0$ and has essentially the same
norm as $B$.

To provide a basis for comparison, eigenvalues, eigenvectors, and
their residuals were computed using the standard Cholesky-based
method. The Cholesky factorization of $B$ ran successfully to
completion despite the ill conditioning of
$B$. Figure~\ref{fig:standard} is a plot of the relative residuals
versus the eigenvalues, with the left plot showing relative residuals
for computed eigenvectors with $\alpha_i = \lambda_i$ and
$\beta_i=1$. Computed negative eigenvalues are shown by negating the
eigenvalue and plotting the residual as a triangle. As expected, the
residuals are large for smaller magnitude eigenvalues.  Some of the
smaller generalized eigenvalues are negative, even though the matrices
have positive generalized eigenvalues. To assess the quality of the
eigenvalues independently of the computed eigenvectors, we note that
an inequality of the form \eqref{eq:relative_residual} holds for some
$\tilde{\vec{v}}_i$ if and only if
\begin{equation*}
  \frac{\sigma_n(\beta_i A - \alpha_iB)}{|\beta_i| \|A\|_2 + |\alpha_i| \|B\|_2}
  \leq \epsilon.
\end{equation*}
The left hand side of this inequality represents the best possible
relative residual for any choice of possible eigenvector. The plot on
the right of Figure~\ref{fig:standard} shows this best possible
relative residual, indicating the poor stability of the algorithm in
computing smaller eigenvalues, independently of the computed
eigenvectors. To keep the scale of the $y$-axis reasonable, we have
set a floor so that any residual less than $10^{-25}$ is shown as
equal to $10^{-25}$. The singular value was computed using inverse
iteration after applying the $QZ$ algorithm to quasi-triangularize both
$A$ and $B$. This can be done once to speed up inverse iteration for all
2003 computed singular values.
 
\begin{figure}
  \caption{Relative Residual vs. $\pm \lambda$, Standard Algorithm}
  \label{fig:standard}
  \vspace{1ex}
  \begin{tabular}{lr}
    \includegraphics[scale=.287]{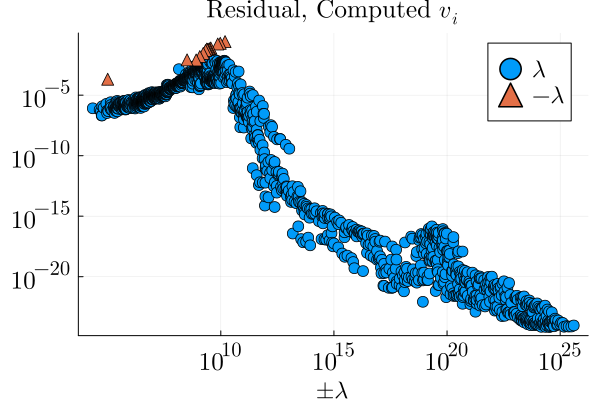} 
    & \includegraphics[scale=.287]{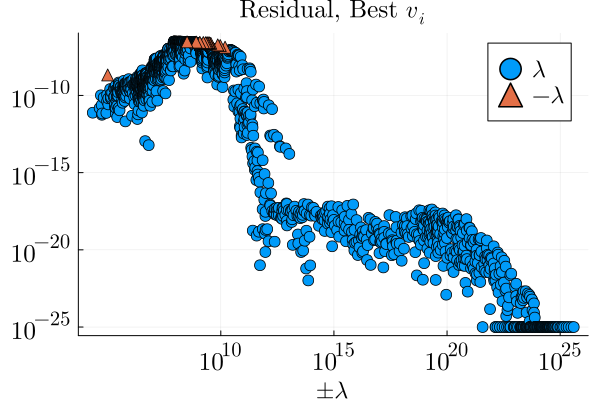}
  \end{tabular}
\end{figure}

For computed $\vec{v}_i$ , using Algorithm~\ref{alg:spectral_trans}
with a modestly sized scaled shift roughly reverses the plot for the
residuals, as expected from the analysis.  We used $\sigma_0 = 10.0$,
which corresponds to $\sigma = 1.2 \times 10^{11}$. This resulted in
$\eta \|X\|_2= 13.5$. The pivoted Cholesky ran to completion so that
2003 finite generalized eigenvalues were computed. Plots of residuals
are shown in Figure~\ref{fig:spectral_small_shift}. There are no
negative computed generalized eigenvalues. The curve is a plot of
$10^{-14} |1 - \lambda_i /\sigma|$, which is proportional to the
corresponding factor in the second inequality in
Theorem~\ref{th:residual_bound}. Residuals are small for eigenvalues
close to or smaller than $\sigma$ in magnitude. Residuals gradually
increase for larger eigenvalues, but they do not appear to increase
quite as quickly as might be expected from the theorem. Even the
largest eigenvalues have smaller residuals than do the small ones when
computed using the standard Cholesky-based algorithm. The plot for the
best possible residuals shows that for some choice of $\tilde{v}_i$,
every computed eigenvalue can result in a small residual. This is
consistent with the bounds of
Theorem~\ref{th:eigenvalue_residuals}. Every individual computed
$\theta_i$ gives an eigenvalue $(1+\sigma \theta_i , \theta_i)$ that
is an eigenvalue of a pair close to $(A, B)$.

Finally, we consider the effect of using a large scaled shift. In this
experiment, we used $\sigma_0 = 10^7$, which corresponded to
$\sigma = 1.2 \times 10^{17}$. This resulted in $\eta \|X\|_2 = 10.5$. The
pivoted Cholesky again ran to completion so that 2003 finite
generalized eigenvalues were computed. Plots of residuals are shown in
Figure~\ref{fig:spectral_large_shift}. In this case, the computed
eigenvectors and eigenvalues achieve small residuals when $\lambda_i$
is not too much larger or smaller in magnitude than the shift. The
curve is $10^{-15} |(1-\lambda_i/\sigma)(1-\sigma/\lambda_i)|$, which
appears as a factor in one of the error terms in the second bound in
Theorem~\ref{th:residual_bound}.  The second graph of the figure shows
that the eigenvalues that are orders of magnitude smaller than the
shift cannot achieve small residuals for any choice of
$\tilde{\vec{v}}_i$ and do not correspond to eigenvalues of a pair
close to $(A, B)$.

It is perhaps interesting to note that between the large shift and the
small shift, the algorithm gives generalized eigenvalues and
eigenvectors that achieve small residuals across the full range of
eigenvalues.  In practice, on this an other examples, it appears that
one can often obtain satisfactory residuals for eigenvalues over a
very large range of magnitudes using repeated applications of the
algorithm with a relatively small number of shifts.  However, success
with a very small number of shifts does seem to depend on a tendency
for the computed results to be smaller than the error bounds predict.
In the bounds, the quantities multiplying $|1 - \lambda_i /\sigma|$
and $|(1-\lambda_i/\sigma)(1-\sigma/\lambda_i)|$ are somewhat larger
than the $10^{-14}$ and $10^{-15}$ illustrated by the curves in the
figure.
 
\begin{figure}
  \caption{Relative Residual vs. $\pm \lambda$, Moderate Scaled Shift}
  \label{fig:spectral_small_shift}
  \vspace{1ex}
  \begin{tabular}{lr}
    \includegraphics[scale=.287]{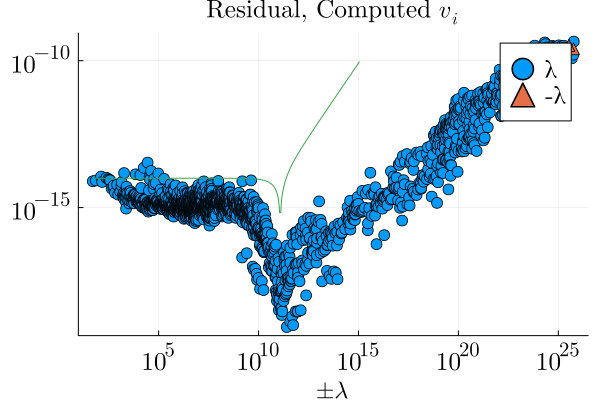} 
    & \includegraphics[scale=.287]{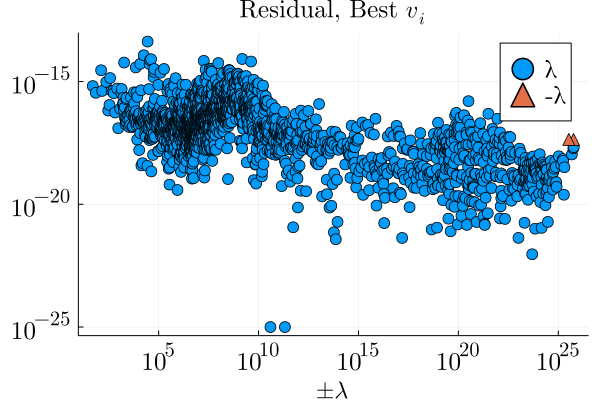}
  \end{tabular}
\end{figure}

\begin{figure}
  \caption{Relative Residual vs. $\pm \lambda$, Large Scaled Shift}
  \label{fig:spectral_large_shift}
  \vspace{1ex}
  \begin{tabular}{lr}
    \includegraphics[scale=.287]{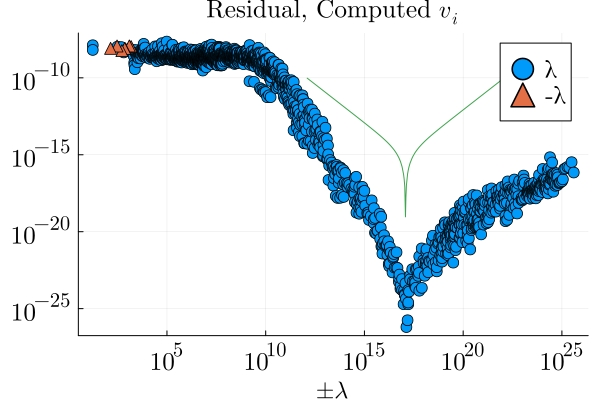} 
    & \includegraphics[scale=.287]{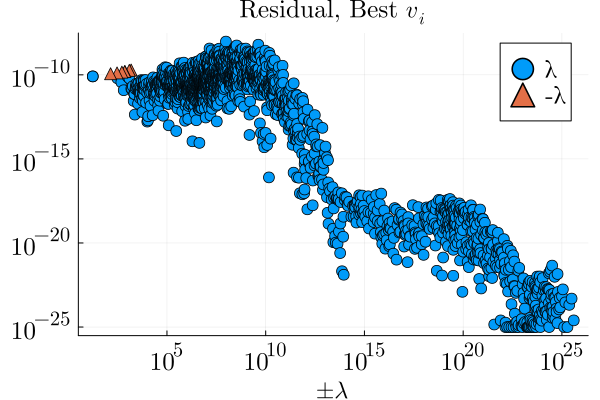}
  \end{tabular}
\end{figure}

\section{Summary}
\label{sec:summary}
\pdfbookmark[0]{Summary}{bk:summary}

The new algorithm has a number of advantages: It is efficient, with
the dominant cost being the computation of a single symmetric
eigenvalue decomposition, as with the standard Cholesky-based
method. The algorithm has useful error bounds when $\eta \|X\|_2$ is
not large and the computation of $A-\sigma B$ does not involve
cancellation resulting in large $\gamma$. In this case, with a
modestly sized scaled shift, for each computed $\theta_i$ each
eigenvalue $(1 + \sigma\theta_i , \theta_i)$ is the eigenvalue of a
pair of matrices close to $(A, B)$. Also with a modestly sized scaled
shift, computed eigenvectors for eigenvalues that are not much larger
than $\sigma$ in magnitude achieve small residuals so that the
algorithm gives an eigenvalue and eigenvector for a nearby matrix
pair.  Larger shifts can be used to focus on specific parts of the
spectrum with useful error bounds on the residuals for eigenvalues not
too far from the shift. The algorithm depends on decompositions with
efficient LAPACK implementations so that it is easy to exploit level-3
BLAS operations. Since it is based on the same transformation as the
spectral transformation Lanczos algorithm, it handles the case of
semi-definite $B$ in the same way with no difficulty.

The primary weakness of the algorithm is perhaps the need to choose a
shift.  Choosing parameters in this way is not typical for direct
methods, for which it is generally preferable to treat methods as
black boxes requiring no input from the user other than the
matrices. The requirement to choose a shift is inconvenient, but we
have not found it to be especially difficult to choose a shift that
achieves satisfactory results. Furthermore, in using the algorithm on
problems in which $\eta \|X\|_2$ is larger than might seem ideal, the
residuals do not seem to increase as quickly as the error bounds might
suggest.

The algorithm is particularly appealing in the case of positive
definite or semidefinite $A$. In this case, all generalized
eigenvalues are nonnegative and $\mu=1$. Lemma~\ref{lm:norm_bound}
implies that simply choosing a negative scaled shift that is not too
small in magnitude will ensure that $\eta \|X\|_2$ is not large. In
this case, one could simply choose $\sigma_0 = -2$, which avoids any
possibility of harmful cancellation in forming $A - \sigma B$. Each
computed eigenvalue then corresponds to an eigenvalue of a pair that
is close to $(A, B)$ and the only increase in residuals for computed
$\vec{v}_i$ is due to the factor $|1 - \lambda_i /\sigma|$. Residual
bounds for large magnitude negative shifts also benefit from the
simplifications for positive definite or semidefinite $A$.

\appendix
\section{Proofs of Theorem~\ref{th:backward_forward_errors} and
  Lemma~\ref{lm:residual_bound}}
\label{sec:proofs-theor-refth:b}

\begin{proof}[{\bf Proof of Theorem~\ref{th:backward_forward_errors}}]
Given $\sigma$, $A$, and $B$ we have
\begin{equation*}
  \breve A = \fl(A-\sigma B) = A - \sigma B + E_0
\end{equation*}
where
\begin{equation*}
  |E_0| \leq u (|A - \sigma B| + |\sigma B|) +O(u^2),
\end{equation*}
which implies
\begin{equation*}
  \|E_0\|_2 \leq u (\left\||A-\sigma B|\right\|_2 
  + \left\| |\sigma B| \right\|_2) + O(u^2).
\end{equation*}
The symmetry of $A$ and $B$ ensure that $E_0 = E_0^T$.  For a general
$n\times n$ matrix $M$ we have
\begin{equation*}
  \left\| |M| \right\|_2 \leq \sqrt{n} \||M|\|_\infty 
  = \sqrt{n} \|M\|_\infty \leq n \|M\|_2,
\end{equation*}
so that the bound can also be written as
\begin{equation*}
  \|E_0\|_2 \leq un (\left\|A-\sigma B\right\|_2 + \left\| \sigma B \right\|_2) + O(u^2).
\end{equation*}

In the statement of the theorem we have assumed
\eqref{eq:A_factor_error} and that $\breve{A}= \breve{A}^\T$.  Thus
the factorization of $\breve{A}$ satisfies
\begin{equation*}
  \breve{A} + E_1 = C_a D_a C_a^\T
\end{equation*}
where $D_a$ is diagonal with $\pm 1$ on the diagonal,
\begin{equation*}
E_1 = E_1^\T, \qquad \|E_1\|_2 \leq u a_n \|\breve{A}\|_2, \eqand
\|C_a\|_2 \leq b_n \|\breve{A}\|_2^{1/2}.
\end{equation*}
We now have a factorization of the same form as given by the second
identity of \eqref{eq:breveE_factorization_errors}, with
$\breve{E} = E_0 + E_1$, $\breve{E} = \breve{E}^\T$, and
\begin{align}
  \|\breve{E}\|_2 
  & \leq \|E_0\|_2 + \|E_1\|_2 \nonumber\\
  & \leq un(\|A-\sigma B\|_2 + \|\sigma B\|_2) + u a_n \|A-\sigma B\|_2 
    + O(u^2) \nonumber\\
  & \leq un(1+2|\sigma_0|) \|A\|_2 + u(1+|\sigma_0|) a_n \|A\|_2,
    \label{eq:Ebreve_bound_deriv}
\end{align}
where we have used \eqref{eq:scaled_shift} and
\eqref{eq:shifted_A_inequality} in the last line.  The second line is
\eqref{eq:Ebreve_bound}.

The bound \eqref{eq:Bfactor_error} implies that $B + F_0 = C_b C_b^\T$
with $\|F_0\|_2 \leq uc_n \|B\|_2$.  If we define $F_1$ by
$C_aX = C_b + F_1$ so that $X = C_a^{-1} (C_b+F_1)$, then from
\eqref{eq:Xerror} we have
\begin{multline*}
  \|F_1\|_2 \leq u d_n \|C_a\|_2 \|X\|_2 
  \leq u b_n d_n \|A-\sigma B\|_2^{1/2} \|X\|_2 + O(u^2) \\
  \leq u b_n d_n \eta \|X\|_2 \|B\|_2^{1/2} + O(u^2) 
  = ub_nd_n \eta \|X\|_2 \|C_b\|_2 +O(u^2)
\end{multline*}
and
\begin{equation*}
  (C_b+F_1)(C_b+F_1)^\T = C_b C_b^\T + F_1 C_b^\T + C_bF_1^\T + O(u^2) 
  = B+F_0 + F_1 C_b^\T + C_b F_1^\T+O(u^2).
\end{equation*}
Defining $F = F_0 + F_1C_b^\T + C_bF_1^\T$ we see that
\begin{equation*}
  \|F\|_2 \leq \|F_0\|_2 + 2 \|F_1\|_2 \|C_b\|_2 
  \leq u (c_n + 2b_n d_n\eta \|X\|_2) \|B\|_2 + O(u^2),
\end{equation*}
where we have used \eqref{eq:F_bound}.  This gives
\eqref{eq:error_relation_B} and \eqref{eq:F1_bound}.

To obtain $E$ and the bound \eqref{eq:E_bound}, we observe that
\begin{equation*}
  C_a D_a C_a^\T = A + \breve{E} + \sigma F - \sigma (B+F).
\end{equation*}
Again applying \eqref{eq:scaled_shift} we have
\begin{equation*}
  \|\sigma F\|_2 = |\sigma| \|B\|_2 \frac{\|F\|_2}{\|B\|_2}
  = |\sigma_0| \frac{\|F\|_2}{\|B\|_2} \|A\|_2.
\end{equation*}
If we define $E = \breve{E}+\sigma F$, then this identity combined
with \eqref{eq:Ebreve_bound} gives the first identity in
\eqref{eq:breveE_factorization_errors} and \eqref{eq:E_bound}.

Finally, we consider the eigenvalue decomposition. We define
$G_0 = X^\T D_a X - W$ and $G_1 = W - \tilde{U} \Theta \tilde{U}^\T$
and use \eqref{eq:Werror}, \eqref{eq:W_eig_error}, and
\eqref{eq:Ebreve_bound_deriv} to obtain
\begin{multline*}
  (C_b+F_1)^\T (A-\sigma B + \breve{E})^{-1} (C_b+F_1) 
  = (C_b+F_1)^\T C_a^{-\T} D_a C_a^{-1}(C_b + F_1) \\
  = X^\T D_a X = W + G_0 = \tilde{U}\Theta \tilde{U}^\T + G_0 + G_1.
\end{multline*}
Defining $G=G_0 + G_1$ gives
\begin{equation*}
  \|G\|_2 \leq ue_n \|X\|_2^2 + u f_n \|W\|_2 \leq u (e_n + f_n) \|X\|_2^2 + O(u^2).
\end{equation*}
Considering the definition of $E$ and $F$ and
\eqref{eq:breveE_factorization_errors}, this establishes
\eqref{eq:error_relation_mixed}.
\end{proof}

\begin{proof}[{\bf Proof of Lemma~\ref{lm:residual_bound}}]
  From \eqref{eq:error_relation_mixed} and the first identity of
  \eqref{eq:breveE_factorization_errors} we have
  \begin{equation*}
    (C_b+F_1)^\T \left(A-\sigma B + \breve{E} \right)^{-1} (C_b+F_1) \tilde{\vec{u}}_i = 
    \theta \tilde{\vec{u}}_i + \tilde{\vec{g}}_i,
  \end{equation*}
  where $\tilde{\vec{g}}_i$ is column $i$ of $G$.  Since
  $\|\tilde{\vec{u}}_i\|_2=1$ and by \eqref{eq:W_eig_error}
  $\tilde{\vec{u}}_i = \vec{u}_i+O(u)$, we can conclude from
  \eqref{eq:G_bound} that the computed $\vec{u}_i$ satisfies
  \begin{equation}
    \label{eq:u_residual}
    (C_b+F_1)^\T \left(A-\sigma B + \breve{E} \right)^{-1} (C_b+F_1) \vec{u}_i 
    = \theta_i \vec{u}_i  + \vec{g}_i,
  \end{equation}
  where
  \begin{equation*}
    \|\vec{g}_i\|_2 \leq O(u) \|X\|_2^2 + O(u^2).
  \end{equation*}
  For the computation of $\vec{v}_i$ from $\vec{u}_i$ the stability
  assumptions of the lemma imply that
  \begin{equation}
    \label{eq:V_computation_errors}
    (C_a^\T + J_1) \vec{v}_i = D_a (X+J_2) \vec{u}_i,
  \end{equation}
  where
  \begin{equation*}
    \|J_1\|_2 \leq O(u) \|C_a\|_2, \eqand
    \|J_2\|_2 \leq O(u) \|X\|_2.
  \end{equation*}
  We then have
  \begin{equation}
    \label{eq:residual0}
    C_a^\T \vec{v}_i = D_a (X+J_2) \vec{u}_i - J_1 \vec{v}_i.
  \end{equation}
  Let $C_aX = C_b+F_1$ as in the proof of
  Theorem~\ref{th:backward_forward_errors}.  Multiplying both sides of
  \eqref{eq:residual0} by $C_aD_a$ and using
  \eqref{eq:breveE_factorization_errors} gives
  \begin{equation}
    \label{eq:residual1}
    (A-\sigma B + \breve{E}) \vec{v}_i 
    = (C_b + F_1 + C_a J_2)\vec{u}_i - C_a D_a J_1 \vec{v}_i.
  \end{equation}
  Multiplying \eqref{eq:u_residual} by $C_b + F_1$ and using
  \eqref{eq:error_relation_B} and
  \eqref{eq:breveE_factorization_errors} results in
  \begin{equation*}
    \theta_i (C_b + F_1)\vec{u}_i 
    = (B+F)\left(A-\sigma B + \breve{E}\right)^{-1} (C_b + F_1)\vec{u}_i
    - (C_b + F_1)\vec{g}_i
  \end{equation*}
  so that \eqref{eq:residual1} becomes
  \begin{multline}
    \label{eq:residual2}
    \theta_i(A-\sigma B + \breve{E})\vec{v}_i =
    (B+F)\left(A-\sigma B + \breve{E}\right)^{-1}(C_b+F_1)\vec{u}_i \\
    - (C_b+F_1)\vec{g}_i + \theta_i C_a J_2 \vec{u}_i - \theta_i C_a D_a J_1 \vec{v}_i.
  \end{multline}
  Combining \eqref{eq:breveE_factorization_errors}, 
  \eqref{eq:V_computation_errors}, and $C_a X = C_b +F_1$ gives
  \begin{align*}
    (A-\sigma B + \breve{E})^{-1} (C_b + F_1) \vec{u}_i 
    & =  C_a^{-\T} D_a C_a^{-1} (C_b + F_1) \vec{u}_i \\
    & = C_a^{-\T} D_a X \vec{u}_i \\
    & = \vec{v}_i + C_a^{-\T} J_1 \vec{v}_i - C_a^{-\T} D_a J_2 \vec{u}_i.
  \end{align*}
  Thus \eqref{eq:residual2} becomes
  \begin{multline}
    \label{eq:residual3}
    \theta_i (A-\sigma B + \breve{E}) \vec{v}_i = (B+F) \vec{v}_i + BC_a^{-\T} J_1 \vec{v}_i
    - B C_a^{-\T} D_a J_2 \vec{u}_i - C_b \vec{g}_i \\
    + \theta_i C_a J_2 \vec{u}_i - \theta_i C_a D_a J_1 \vec{v}_i +O(u^2).
  \end{multline}
  Using the fact that when $\vec{v}_i$ is computed as in
  Algorithm~\ref{alg:spectral_trans},
  Lemma~\ref{lm:spectral_transformation} implies that
  \begin{equation*}
    C_b^\T \vec{v}_i = \theta_i \vec{u}_i + O(u)
  \end{equation*}
  and the fact that $BC_a^{-\T}= C_b X^\T + O(u)$ turns
  \eqref{eq:residual3} into
  \begin{multline}
    \label{eq:residual4}
    \theta_i A \vec{v}_i - (1+\sigma \theta_i) B \vec{v}_i = F \vec{v}_i - \theta_i \breve{E} \vec{v}_i 
    + C_b X^\T J_1 \vec{v}_i - \frac{1}{\theta_i} C_b X^\T D_a J_2 C_b^\T \vec{v}_i - C_b \vec{g}_i \\
    + C_a J_2 C_b^\T \vec{v}_i - \theta_i C_a D_a J_1 \vec{v}_i + O(u^2).
  \end{multline}
  The proof of the theorem then reduces to showing that each of the
  terms in \eqref{eq:residual4} is consistent with the bound in the
  theorem.

  Using the bounds in Theorem~\ref{th:backward_forward_errors} we have
  \begin{equation*}
    \|\theta_i \breve{E}\|_2 
    \leq O(u) \left(|\theta_i\sigma| \|B\|_2
      + |\theta_i| \|A-\sigma B\|_2\right)
    \leq O(u) \left(|\theta_i\sigma| + \eta^2 \|X\|_2^2 \right) \|B\|_2,
  \end{equation*}
  where
  \begin{equation*}
    |\sigma \theta_i| 
    \leq \frac{|\sigma|\|B\|_2}{\|A-\sigma B\|_2} \eta^2 \|X\|_2^2
    = \gamma |\sigma_0| \eta^2 \|X\|_2^2
    \leq 2 \max(\gamma, 1) \eta^2 \|X\|_2^2,
  \end{equation*}
  where we have used \eqref{eq:gamma_sigma0_bound}.  For the other
  terms we have
  \begin{equation*}
    \|F\|_2 \leq \left( uc_n + O(u) \eta \|X\|_2 \right)\|B\|_2,
  \end{equation*}
  \begin{equation*}
    \|C_b X^\T J_1\|_2
    \leq O(u) \|B\|_2^{1/2} \|X\|_2 \|A-\sigma B\|_2^{1/2}
    \leq O(u) \eta \|X\|_2 \|B\|_2,
  \end{equation*}
  \begin{equation*}
    \|C_b\vec{g}_i\|_2 
    = O(u) \|B\|_2^{1/2} \|X\|_2^2  
    = O(u) \frac{1}{\eta^2|\theta_i|} \|B\|_2 \eta^2 \|X\|_2^2 \|\vec{v}_i\|_2,
  \end{equation*}
  \begin{equation*}
    \left\| \frac{1}{\theta_i} C_b X^\T D_a J_2 C_b^\T \right\|_2 
    = O(u) \frac{1}{\eta^2|\theta_i|} \|B\|_2 \eta^2 \|X\|_2^2,
  \end{equation*}
  \begin{equation*}
    \|C_a J_2 C_b^\T\|_2 = O(u) \|A-\sigma B\|_2^{1/2} \|B\|_2^{1/2} \|X\|_2 = O(u)\eta \|X\|_2 \|B\|_2,
  \end{equation*}
  and
  \begin{equation*}
    \|\theta_i C_a D_a J_1\|_2 = O(u) |\theta_i| \|A-\sigma B\|_2 = O(u)\eta^2 \|X\|_2^2 \|B\|_2.
  \end{equation*}
\end{proof}

\bibliography{/home/mas/work/bib/ref}

\begin{thebibliography}{10}
\bibitem{abbd:99}
{\sc E.~Anderson, Z.~Bai, C.~Bischof, J.~Demmel, J.~Dongarra, J.~D. Croz,
  A.~Greenbaum, S.~Hammarling, A.~McKenney, S.~Ostrouchov, and D.~Sorensen},
  {\em {LAPACK} Users' Guide}, SIAM, Philadelphia, third~ed, 1999.

\bibitem{asgl:98}
{\sc C.~Ashcraft, R.~G. Grimes, and J.~G. Lewis}, {\em Accurate symmetric
  indefinite linear equation solvers}, SIAM J. Matrix Anal. Appl., 20 (1998),
  pp.~513--561.

\bibitem{buka:77}
{\sc J.~R. Bunch and L.~Kaufman}, {\em Some stable methods for calculating
  inertia and solving symmetric linear systems}, Math. Comp., 31
  (1977), pp.~163--179.

\bibitem{bupa:71}
{\sc J.~R. Bunch and B.~N. Parlett}, {\em Direct methods for solving symmetric
  indefinite systems of linear equations}, SIAM J. Numer. Anal.,
  8 (1971), pp.~639--655.

\bibitem{chand:00} {\sc S.~Chandrasekaran}, {\em An efficient and
    stable algorithm for the symmetric-definite generalized eigenvalue
    problem}, SIAM J. Matrix Anal. Appl., 21 (2000), pp.~1202--1228.

\bibitem{daht:01}
{\sc P.~I. Davies, N.~J. Higham, and F.~Tisseur}, {\em Analysis of the cholesky
  method with iterative refinement for solving the symmetric definite
  generalized eigenvalue problem}, SIAM J. Matrix Anal. Appl., 23 (2001),
  pp.~472--493.

\bibitem{erru:80}
{\sc T.~Ericsson and A.~Ruhe}, {\em The spectral transformation {Lanczos}
  method for the numerical solution of large sparse generalized symmetric
  eigenvalue problems}, Math. Comp., 35 (1980), pp.~1251--1268.

\bibitem{fihe:72} {\sc G.~Fix and R.~Heiberger}, {\em An algorithm for
    the ill-conditioned generalized eigenvalue problem}, SIAM
  J. Numer. Anal., 9 (1972), pp.~78--88.

\bibitem{frto:98} {\sc V.~Frayss\'e and V.~Toumazou}, {\em A note on
    the normwise perturbation theory for the regular generalized
    eigenproblem}, Numer. Linear Algebra Appl., 5 (1998), pp.~1--10.

\bibitem{govl:13}
{\sc G.~H. Golub and C.~F. Van~Loan}, {\em Matrix Computations}, Johns Hopkins
  University Press, Baltimore, MD, fourth~ed., 2013.

\bibitem{grls:94} {\sc R.~G. Grimes, J.~G. Lewis, and H.~D. Simon},
  {\em A shifted block lanczos algorithm for solving sparse symmetric
    generalized eigenproblems}, SIAM J. Matrix Anal. Appl., 15 (1994),
  pp.~228--472.

\bibitem{hihi:98}
{\sc D.~J. Higham and N.~J. Higham}, {\em Structured backward error and
  condition of generalized eigenvalue problems}, SIAM J. Matrix Anal. Appl., 20
  (1998), pp.~493--512.

\bibitem{high:97}
{\sc N.~J. Higham}, {\em Stability of the diagonal pivoting method with partial
  pivoting}, SIAM J. Matrix Anal. Appl., 18 (1997),
  pp.~52--65.

\bibitem{high:02}
\leavevmode\vrule height 2pt depth -1.6pt width 23pt, {\em Accuracy and
  Stability of Numerical Algorithms}, SIAM, Philadelphia, second~ed., 2002.

\bibitem{laro:05}
{\sc P.~Lancaster and L.~Rodman}, {\em Canonical forms for hermitian matrix
  pairs under strict equivalence and congruence}, SIAM Rev., 47 (2005),
  pp.~407--443.

\bibitem{most:73} {\sc C.~B. Moler and G.~W. Stewart}, {\em An
    algorithm for generalized matrix eigenvalue problems}, SIAM
  J. Numer. Anal., 10 (1973), pp.~241--256.

\bibitem{npej:87}
{\sc B.~Nour-Omid, B.~N. Parlett, T.~Ericsson, and P.~S. Jensen}, {\em How to
  implement the spectral transformation}, Math. of Comp., 48
  (1987), pp.~663--673.

\bibitem{parl:80}
{\sc B.~N. Parlett}, {\em The Symmetric Eigenvalue Problem}, SIAM, Philadelphia, 1998.
\newblock Revised version of the book published by Prentice-Hall in 1980.

\bibitem{pewi:70b}
{\sc G.~Peters and J.~H. Wilkinson}, {\em {$Ax = \lambda Bx$} and the
  generalized eigenproblem}, SIAM J. Numer. Anal., 7 (1970),
  pp.~479--492.

\bibitem{stew:01}
{\sc G.~W. Stewart}, {\em Matrix Algorithms II: Eigensystems}, SIAM,
  Philadelphia, 2001.

\bibitem{stsu:90}
{\sc G.~W. Stewart and J.-G. Sun}, {\em Matrix Perturbation Theory}, Academic
  Press, New York, 1990.

\bibitem{wilk:65}
{\sc J.~H. Wilkinson}, {\em The Algebraic Eigenvalue Problem}, Clarendon Press,
  Oxford, 1965.

\end{thebibliography}

\end{document}